\numberwithin{equation}{section}
\newtheorem{lemma}{Lemma}
\newtheorem{theorem}{Theorem}
\newtheorem{corollary}{Corollary}
\begin{document}

\baselineskip 18pt

\newcommand{\E}{\mathbb{E}}
\newcommand{\Eof}[1]{\mathbb{E}\left[ #1 \right]}
\newcommand{\Et}[1]{\mathbb{E}_t\left[ #1 \right]}
\renewcommand{\H}{\mathbb{H}}
\newcommand{\R}{\mathbb{R}}
\newcommand{\sigl}{\sigma_L}
\newcommand{\BS}{\rm BS}
\newcommand{\rv}{{\rm RV}}
\newcommand{\p}{\partial}
\renewcommand{\P}{\mathbb{P}}
\newcommand{\Pof}[1]{\mathbb{P}\left[ #1 \right]}
\newcommand{\var}{{\rm var}}
\newcommand{\cov}{{\rm cov}}
\newcommand{\beaa}{\begin{eqnarray*}}
\newcommand{\eeaa}{\end{eqnarray*}}
\newcommand{\bea}{\begin{eqnarray}}
\newcommand{\eea}{\end{eqnarray}}
\newcommand{\ben}{\begin{enumerate}}
\newcommand{\een}{\end{enumerate}}
\newcommand{\bit}{\begin{itemize}}
\newcommand{\eit}{\end{itemize}}

\newcommand{\bsb}{\boldsymbol{b}}
\newcommand{\bX}{\boldsymbol{X}}
\newcommand{\bx}{\boldsymbol{x}}
\newcommand{\by}{\boldsymbol{y}}
\newcommand{\bE}{\mathbf{e}}
\newcommand{\bw}{\mathbf{w}}
\newcommand{\bsR}{\boldsymbol{R}}
\newcommand{\bW}{\boldsymbol{W}}
\newcommand{\bB}{\boldsymbol{B}}
\newcommand{\bZ}{\boldsymbol{Z}}
\newcommand{\bH}{\mathbf{H}}
\newcommand{\bF}{\mathbf{F}}
\newcommand{\bG}{\mathbf{G}}
\newcommand{\bs}{\mathbf{s}}
\newcommand{\bt}{\mathbf{t}}
\newcommand{\bsihat}{\widehat{\bs_i}}
\newcommand{\bseta}{\boldsymbol{\eta}}

\newcommand{\sgn}{\mbox{sgn}}

\newcommand{\mt}{\mathbf{t}}
\newcommand{\mS}{\mathbb{S}}

\newcommand{\argmax}{{\rm argmax}}
\newcommand{\argmin}{{\rm argmin}}

\newcommand{\tM}{\widetilde{M}}
\newcommand{\tEof}[1]{\tilde{\mathbb{E}}\left[ #1 \right]}
\newcommand{\tP}{\tilde{\mathbb{P}}}
\newcommand{\tW}{\tilde{W}}
\newcommand{\tB}{\tilde{B}}
\newcommand{\1}{\mathbf{1}}
\renewcommand{\O}{\mathcal{O}}
\newcommand{\dt}{\Delta t}
\newcommand{\tr}{{\rm tr}}

\newcommand{\Xv}{X^{(v)}}
\newcommand{\Xvs}{X^{(v^*)}}
\newcommand{\Jv}{J^{(v)}}

\newcommand{\cA}{\mathcal{A}}
\newcommand{\cG}{\mathcal{G}}
\newcommand{\cF}{\mathcal{F}}
\newcommand{\cL}{\mathcal{L}}
\newcommand{\cLv}{\mathcal{L}^{(v)}}
\newcommand{\cV}{\mathcal{V}}

\newcommand{\ttheta}{\tilde{\theta}}
\newcommand{\vega}{{\rm vega}}

\newcommand{\inn}[1]{\langle #1 \rangle}

\newcommand{\cred}{\textcolor[rgb]{0.80,0.00,0.00}}

\allowdisplaybreaks





\title[Entropy-regularized robust optimal order execution]{Relative entropy-regularized robust optimal order execution}
\begin{abstract}
The problem of order execution is cast as a relative entropy-regularized robust optimal control problem in this article. 
The order execution agent's goal is to maximize an objective functional associated with his profit-and-loss of trading and simultaneously minimize the inventory risk associated with market's liquidity and uncertainty. We model the market's liquidity and uncertainty by the principle of least relative entropy associated with the market trading rate. 
The problem of order execution is made into a relative entropy-regularized stochastic differential game. Standard argument of dynamic programming yields that the value function of the differential game satisfies a relative entropy-regularized Hamilton-Jacobi-Isaacs (rHJI) equation. Under the assumptions of linear-quadratic model with Gaussian prior, the rHJI equation reduces to a system of Riccati and linear differential equations. Further imposing constancy of the corresponding coefficients, the system of differential equations can be solved in closed form, resulting in analytical expressions for optimal strategy and trajectory as well as the posterior distribution of market trading rate. Numerical examples illustrating the optimal strategies and the comparisons with conventional trading strategies are conducted. 
\end{abstract}


\author[M. Wang]{Meng Wang}
\author[T.-H. Wang]{Tai-Ho Wang}

\address{Meng Wang \newline
Financial Engineering, \newline
Baruch College, The City University of New York \newline
1 Bernard Baruch Way, New York, NY10010
}
\email{maggiewm.pku@gmail.com}

\address{Tai-Ho Wang \newline
Department of Mathematics \newline
Baruch College, The City University of New York \newline
1 Bernard Baruch Way, New York, NY10010
}
\email{tai-ho.wang@baruch.cuny.edu}

\keywords{Optimal execution, Price impact, Inventory cost, Entropy-regularized stochastic control}


\maketitle


%
%


\section{Introduction}
Order execution, a mission that algorithmic trading departments and execution brokerage agencies embark on regularly, is cast as a relative entropy-regularized robust optimal control problem. During the course of executing a large order of significant amount, the agent faces with not only the risk of price impact that his own execution would incur towards the transaction price but also the liquidity and uncertainty of the market. The agent's goal is to maximize an objective functional associated with his profit-and-loss (P\&L) of trading and simultaneously minimize the execution risk. 

A quote by Kyle in \cite{kyle} states that
\begin{quote}
{\it Roughly speaking, Black defines a liquid market as one which is almost infinitely tight, which is not infinitely deep, and which is resilient enough so that prices eventually tend to their underlying value.}
\end{quote}
As such, we model the market's liquidity and uncertainty by the {\it principle of least relative entropy} associated with the market trading rate. In other words, the market is resilient enough so that the probability distribution of trading rate of the market, though impacted due to the presence of the order execution agent's trading, would stay as close, in terms of the Kullback-Leibler divergence, as possible to the distribution prior to the presence of the agent's trades. Such consideration also provides a framework for the order execution agent to assess the market liquidity and uncertainty risk incurred from his own trade. Henceforth, the agent will be able to dynamically adjust his trading strategy accordingly. 
It worths to mention that, in literatures on optimal order execution, market resilience was usually modeled as the resilience of limit order book. For instance, as a pioneer work in this direction, \cite{obizhaeva-wang} considered the case of flat order book density and  \cite{alfonsi-schied} generalized it to general order book shape. In these models, the resilience of limit order book is depicted, after the limit order book is hit or lifted by market orders, as the recovery in an exponential rate back to its original stationary form of order book density. In the current article, we take a different route, we model market resilience by the principle of least relative entropy as mentioned earlier. This formulation also has the advantage of applying reinforcement learning techniques, similar to \cite{kim-yang} and \cite{wzz}, to the optimal execution framework. The regularization by Kullback-Leibler divergence prevents drastic change to market trading rate, the market is thus considered resilient in this sense. 

The order execution agent is considered as playing a stochastic differential game against the market in that he attempts to minimize the entropy-regularized risk while at the same time maximizing his own P\&L at the liquidation horizon, with possible penalty at terminal time should there be a final block trade. Standard argument of dynamic programming remains applicable in this setting, it follows that the value function of the differential game satisfies a relative entropy-regularized Hamilton-Jacobi-Isaacs (rHJI) equation. 
Under the assumptions of linear-quadratic model with Gaussian prior, the rHJI equation reduces to a system of Riccati and linear differential equations. Further imposing constancy of the corresponding coefficients, the system of differential equations can be solved in closed form, resulting in analytical expressions for the optimal trading strategy, the optimal expected trading trajectory as well as the posterior distribution of market trading rate. The optimal strategies obtained in this case resemble the renowned Almgren-Chriss strategy in \cite{almgren-chriss}. However, parts of the parameters involved in the determination of the strategies are from the prior distribution of market trading rate, the liquidity and uncertainty risk, as well as relative entropy penalty factor, which are naturally embedded into the optimal strategies. 

The rest of the paper is organized as follows. We formulate and model the order execution problem in Section \ref{sec:model-setup}. Section \ref{sec:execution-game}  recasts the optimal order execution problem as a relative-entropy regularized stochastic differential game and derives the associated Hamilton-Jacobi-Isaacs equation. Section \ref{sec:soln} proposes two model assumptions that allow us to solve the Hamilton-Jacobi-Isaacs equation in closed form and presents the associated optimal trading strategies and trajectories. 
Verification theorems and duality between the primal and the dual problems are shown in Sections \ref{sec:verification-duality}. Section \ref{sec:non_constant_volatility} briefly discusses the extension to non-constant price volatility.   
Numerical illustrations and discussions on the results are reported in Section \ref{sec:numerical}. 
Section \ref{sec:conclusion} concludes the paper with discussions. Finally, we briefly review the relative entropy-regularized control problem and its corresponding Hamilton-Jacobi-Bellman equation, called the {\it relative entropy-regularized HJB equation}, as an appendix in Section \ref{sec:appendix}.  

Throughout the paper, $\left(\Omega, \mathcal{F}, \mathbb{P}\right)$ denotes a complete probability space equipped with a filtration describing the information structure $\mathbb{F} := \left\{\mathcal{F}_{t}\right\}_{t \in [0,T]}$, where $t$ is the time variable and $T>0$ the fixed finite liquidation horizon. Let $\left\{W^S_t, W^X_t\right\}_{t \in [0,T]}$ be a two-dimensional Brownian motion with constant correlation $\rho$ defined on $\left(\Omega, \mathcal{F}, \mathbb{P}\right)$. The filtration $\mathbb{F}$  is generated by the trajectories of the above Brownian motion, completed with all $\mathbb{P}$-null measure sets of $\mathcal{F}$.


\section{Model setup} \label{sec:model-setup}

In this section, we layout the problem and set up the model in a continuous time setting for an order execution agent who is missioned to liquidate a given amount of shares of certain stock under price impact within the time interval $[0, T]$. In addition to execution risk, the model takes into account market's liquidity and uncertainty risk. In the following, $X_t$ denotes the agent's holdings at time $t$, $S_t$ the efficient or mid price of the stock, and $\tilde S_t$ the agent's transacted price at time $t$. 

\subsection{Price impact model}

To take into account the liquidity and uncertainty risk from market trading rate when executing orders, we propose a price impact model as follows. Let $a_t$ be the random market trading rate at time $t$, following distribution $\pi_t$. 
The efficient price $S_t$ is assumed governed by the stochastic differential equation (SDE)
\begin{equation}
dS_t = \gamma dX_t + \gamma_M \inn{a}_t dt + \sigma_t^S dW_t^S, \label{eqn:S}
\end{equation}
where $\inn{\cdot}_t$ denotes averaging over $\pi_t$, i.e., $\inn{a}_t = \int a \pi_t(a) da$ and $\sigma_t^S = \sqrt{\int \tilde\sigma^2(a) \pi_t(a) da}$ for some $\tilde\sigma$. Though it is well-documented that the volatility $\sigma^S_t$ is highly correlated with market trading rate, we shall for simplicity assume $\sigma^S$ is constant throughout the article. 
The traded price $\tilde S_t$ at time $t$ is given by 
\begin{equation}
\tilde S_t = S_t + \eta v_t
\end{equation}
for $\eta > 0$, where $v_t$ is the agent's intended trading rate at time $t$. The $\gamma dX_t$ terms in \eqref{eqn:S} is usually referred to as the {\it permanent impact} and the $\eta v_t$ term in $\tilde S_t$ as the {\it temporary impact}. Note that in \eqref{eqn:S} we assume the averaged market trading rate $\inn{a}$ also has an impact permanently to the efficient price; however, it carries no temporary impact contributing to the transacted price. This assumption is consistent with regular activities in the market in the sense that if in average there are more buy orders than sell orders in the market, i.e., $\inn{a} > 0$, the efficient price will increase; otherwise, the price decreases. Also if buy and sell orders even out in average, apart from the possible permanent impact incurred from the agent's trading, the efficient price is a martingale. 

We remark that, to our knowledge, most literatures on order execution do not particularly single out the market trading rate $\inn{a}$ but rather include it in the diffusion part. However, there exist literatures considering impact from other market participants by imposing stochastic price impact. For example, \cite{mariani-fatone} introduces an additional noise to the execution price resulting from other investors' trading activities. \cite{ma-siu-zhu-elliott} models the net permanent impact from other participants in the market by a mean-reverting process, and both permanent and temporary price impact coefficients of the investor are assumed to follow a jump process on a finite state space. In this paper, we factor it out from the diffusion term so as to assess the market's liquidity and uncertainty risk in order execution. We then consider the agent's order execution problem as a robust control problem, i.e., optimizing the objective function in the worst case scenario as will be described in Section \ref{sec:execution-game}.

\subsection{Agent's P\&L}
A recent paper by Carmona and Leal in \cite{carmona-leal} showed pretty convincingly that the presence of a Brownian component in the agent's inventory is statistically significant. To incorporate, we model the agent's holdings $X_t$ at time $t$ as
\begin{eqnarray}
&& dX_t = v_t dt + \sigma^X dW_t^X ,\quad t\in [0,T], \label{eqn:X} \\
&& X_0 = x_{0}>0 \nonumber 
\end{eqnarray}
should the agent's intended trading rate is $v_t$ at time $t$.
The Brownian motion component $W_t^X$ and the volatility $\sigma^X$ proxy the uncertainty of the agent's holdings. The two Brownian motions $W_t^S$ in \eqref{eqn:S} and $W_t^X$ in \eqref{eqn:X} are assumed correlated with correlation $\rho$. Concerning the correlation $\rho$ between the two Brownian motions, it is argued in \cite{carmona-webster} that the correlation $\rho$ is negative when
trading with market orders and positive when trading with limit orders.
Also, optimal order execution in this situation has been studied. For example, 
\cite{c-dg-w1} focuses on the order execution problem with different utility functions, \cite{c-dg-w2} in addition considers the evolution of the riskiness of the penalty due to the final block trade, \cite{dg-t-w} introduces the price pressure driven by market makers' inventories' risk, and \cite{mariani-fatone} adds additional noise in the execution price due to the activities of other investors.

The agent's profit-and-loss (P\&L hereafter) $\Pi^0_t$ during execution course up to time $t$ is given by 
\begin{equation}
\Pi^{0}_t := X_t \left( S_t - S_0 \right) + \int_{0}^{t} (S_0 - \tilde{S}_u) dX_u. \label{eqn:pl0}
\end{equation}
Note that the first term on the right-hand side of \eqref{eqn:pl0} captures the change in fair (marked-to-market) value of the remaining untransacted shares, while the second term measures the transaction gain/loss resulting from selling shares due to spread, volatility, and price impact. Moreover, since the agent is mandated to liquidate all his position at the liquidation horizon $T$, if he is left in his holdings with $X_T$ shares at the terminal time $T$, his final P\&L $\Pi_T$ at time $T$ is to be penalized by $g(X_T)$, where $g(0) = 0$ and $g < 0$ if $X_T \neq 0$. Thus, the agent's P\&L $\Pi_T$ after taking a final block trade at time $T$ is given by
\[
\Pi_T = \Pi^0_T + g(X_T).
\]

We note that, by applying Ito's formula and straightforward calculations, the P\&L $\Pi^0_T$ may be rewritten, without explicit dependence on price $S$, as  
\begin{eqnarray}
\Pi^0_T 
&=& \frac\gamma2 (X_T^2 - X_0^2) + \gamma_M \int_0^T X_t \inn{a}_t dt + \int_0^T X_t \sigma^S dW^S_t + \frac\gamma2 \int_0^T (\sigma^X)^2 dt  \nonumber \\
&& + \int_0^T \rho\sigma^S \sigma^X dt - \eta \int_0^T v_t^2 dt - \eta \int_0^T v_t \sigma^X dW_t^X.  \label{eqn:pnl0-1}
\end{eqnarray}
Hence, the expected P\&L $\Pi^0$ prior to a final block trade is given by 
\begin{eqnarray*}
\Eof{\Pi^0_T} &=& -\frac\gamma2 X_0^2 + \Eof{\frac\gamma2 X_T^2 + \int_0^T \left\{\gamma_M X_t \inn{a}_t + \frac\gamma2  (\sigma^X)^2 + \rho\sigma^S \sigma^X - \eta v_t^2 \right\} dt}. 
\end{eqnarray*}
We shall ignore the constant term $-\frac\gamma2 X_0^2$ in the equation above when time comes to formulate and solve the agent's order execution as a control problem.  

\subsection{Execution and market risk}
The agent's risk, which consists of the execution risk and the market's liquidity and uncertainty risk within the time interval $[t, T]$, is modeled as 
\begin{eqnarray*}
g_M(S_T) + \int_t^T R(u, X_u, S_u, v_u, a_u) du
\end{eqnarray*}
for some given terminal risk $g_M$ on the stock price and running risk $R$. For example, the running risk $R$ may include the quadratic variation of P\&L $\Pi_0$ over liquidation horizon as one of its risk components. We refer to the assumptions imposed in Section \ref{sec:soln} for more detailed discussions on imposing the running risk $R$. 

In summary, the performance criterion for the agent's execution is given by the execution P\&L penalized by the execution risk and the market liquidity and uncertainty risk. Namely, 
\[
g(X_T) + X_t \left( S_t - S_0 \right) + \int_{0}^{t} (S_0 - \tilde{S}_u) dX_u + g_M(S_T) + \int_t^T R(u, X_u, S_u, v_u, a_u) du.
\]
We recall that the first three terms in the equation above correspond to the execution P\&L penalized by a final block trade while the last two terms represent risks.  

\section{Order execution as regularized stochastic differential game} \label{sec:execution-game}
With the model set up in Section \ref{sec:model-setup}, we recast the agent's order execution as a {\it relative entropy-regularized stochastic differential game} in this section. The agent is regarded as playing a game against the market in a conservative manner in the sense that, rather than rigorously minimizing the risks incurred from the market as in the methodology of  robust optimal control, he only attempts to minimize entropy-regularized risk while at the same time maximizing his P\&L at the liquidation horizon. 

Alternatively, inspired by Fisher Black's notion on liquid market as stated in \cite{kyle}, which we reiterate in the following
\begin{quote}
{\it ``Roughly speaking, Black defines a liquid market as one which is almost infinitely tight, which is not infinitely deep, and which is resilient enough so that prices eventually tend to their underlying value''},
\end{quote} 
this concept of minimizing the relative entropy-regularized risk can be thought of as representing the market resilience via the {\it principle of least relative entropy} for market activity. In other words, the distribution $\pi$ of market's trading rate $a$ is also impacted due to the presence of the agent's trading. However, it is resilient enough so as to stay as close, in terms of the Kullback-Leibler divergence, as possible to the prior distribution $\pi^0$. 

\subsection{Order execution problem}
We regard the differential game played by the market is to minimize the following relative entropy-regularized control problem among admissible distributions $\pi\in\cA$ for the market trading rate $a$
\begin{eqnarray*}
\min_{\pi\in\cA} \Eof{g_M(S_T) + \int_0^T \int\left\{R(u, X_u, S_u, v_u, a) + \frac1\beta\log\frac{\pi_u(a)}{\pi^0_u(a)} \right\} \pi_u(a)da du}
\end{eqnarray*}
for some $\beta > 0$. The set of admissible distributions $\cA$ consists of the distributions $\pi_t$ that are adapted to the filtration $\cF_t$ and absolutely continuous to a given prior distribution $\pi^0_t$ for $t \in [0, T]$. We now formulate the agent's order execution problem as the following relative entropy-regularized robust optimal control problem. 
\begin{eqnarray}
&& \max_{v\in\cV} \min_{\pi\in\cA} \E\left[g(X_T) + X_T S_T  - \int_0^T \tilde S_t dX_t \right. \nonumber \\
&& \qquad + g_M(S_T) + \left. \int_0^T \int\left\{R(t, X_t, S_t, v_t, a) + \frac1\beta\log\frac{\pi_t(a)}{\pi^0_t(a)} \right\} \pi_t(a)da\,dt \right], \label{eqn:rr-control}
\end{eqnarray}
where $\cV$ consists of adapted, square integrable processes in $[0, T]$.  

We remark that (a) the collection of admissible distributions in \eqref{eqn:rr-control} represents a Knightian uncertainty\footnote{Knightian uncertainty is named after the renowned economist Frank Knight who formalized a distinction between risk and uncertainty in his 1921 book entitled {\it Risk, Uncertainty, and Profit}. An easy to access explanation on Knightian uncertainty can be found at the webpage
\href{https://news.mit.edu/2010/explained-knightian-0602}{news.mit.edu/2010/explained-knightian-0602}.}; (b) the inner minimization problem can be further subject to certain constraints on the posterior distribution such as first and second order statistics. Finally, by substituting \eqref{eqn:pnl0-1} for P\&L, the problem \eqref{eqn:rr-control} is transformed into 
\begin{eqnarray}
&& \max_{v\in\cV} \min_{\pi\in\cA} \Eof{G(X_T, S_T) \right.  \label{eqn:rr-control1} \\
&& \qquad \quad \left. 
+ \int_0^T\!\!\!\!\int\left\{\tilde R(t, X_t, S_t, v_t, a) + \frac1\beta\log\frac{\pi_t(a)}{\pi^0_t(a)} \right\} \pi_t(a)da\,dt}, \nonumber
\end{eqnarray}
where $G(X_T, S_T) := g(X_T) + \frac\gamma2 X_T^2 + g_M(S_T)$ and 
\[
\tilde R(t, x, s, v, a) = \gamma_M x a + \frac\gamma2  (\sigma^X)^2 + \rho\sigma^S \sigma^X - \eta v^2 + R(t, x, s, v, a).
\]
We shall be mainly dealing with the problem \eqref{eqn:rr-control1} in the following sections.  

\subsection{The relative entropy-regularized Hamilton-Jacobi-Isaacs equation}
To solve the execution problem \eqref{eqn:rr-control1} which can be regarded as a stochastic differential game, define the value function $V$ by 
\begin{eqnarray}
&& V(t, x, s) \label{eqn:value_func} \\ 
&=& \!\!\!\! \max_{v\in\cV_t} \min_{\pi\in\cA_t} \E_t\left[G(X_T ,S_T) 
+ \int_t^T\!\!\!\!\!\int\left\{\tilde R(\tau, X_\tau, S_\tau, v_\tau, a_\tau) + \frac1\beta\log\frac{\pi_\tau(a)}{\pi^0_\tau(a)} \right\} \pi_\tau(a)da\,d\tau\right]. \nonumber
\end{eqnarray}
where $\E_t[\cdot]$ denotes the conditional expectation $\Eof{\cdot|\cF_t}$ and assume $(X_t, S_t) = (x, s)$.
$\cV_t$ and $\cA_t$ denote the admissible strategies and distributions in $\cV$ and $\cA$ respectively that are restricted to the time interval $[t, T]$.
By applying the standard dynamical programming principle argument, one can show that the value function $V$ in \eqref{eqn:value_func} satisfies a {\it relative entropy-regularized Hamilton-Jacobi-Isaacs} (rHJI hereafter) equation which we summarize in the following theorem whose proof is omitted. 
\begin{theorem}
The value function $V$ in \eqref{eqn:value_func} satisfies the following rHJI equation
\begin{equation} \label{eqn:rHJI}
\max_{v} \min_{\pi \ll \pi_t^0} \int \left\{V_t + \cL V + \tilde R(t, x, s, v, a) + 
\frac1\beta\log\frac{\pi(a)}{\pi^0_t(a)} \right\} \pi(a) da = 0, 
\end{equation}
with terminal condition $V(T, x, s) = G(x, s)$, assuming enough regularity for the value function $V$. $\cL$ denotes the infinitesimal generator for the SDEs
\begin{eqnarray*}
&& dS_t = \gamma dX_t + \gamma_M a_t dt + \sigma^S dW^S_t, \\
&& dX_t = v_t dt + \sigma^X dW^X_t,
\end{eqnarray*}
where the Brownian motions $W_t^S$ and $W^X_t$ are correlated with correlation $\rho$.
\end{theorem}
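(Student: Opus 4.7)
The plan is to derive the rHJI equation \eqref{eqn:rHJI} via the standard dynamic programming argument, adapted to the saddle-point structure of a relative entropy-regularized differential game. First, I would establish (assuming the necessary measurability and regularity) the Bellman--Isaacs principle: for every sufficiently small $h>0$ and every $(x,s)$,
\begin{eqnarray*}
V(t, x, s) &=& \max_{v \in \cV_t} \min_{\pi \in \cA_t} \E_t\!\left[ V(t+h, X_{t+h}, S_{t+h})  \right.\\
&& \qquad + \left.\int_t^{t+h}\!\!\!\int \left\{ \tilde R(\tau, X_\tau, S_\tau, v_\tau, a) + \tfrac1\beta\log\tfrac{\pi_\tau(a)}{\pi^0_\tau(a)}\right\} \pi_\tau(a)\, da\, d\tau \right].
\end{eqnarray*}
Conditioning on $\cF_t$ with $(X_t, S_t) = (x, s)$, this reduces the problem to a one-step optimization over the controls on $[t, t+h]$ coupled with the value at time $t+h$.

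Next, assuming $V \in C^{1,2,2}$, I would apply It\^o's formula to $V(t+h, X_{t+h}, S_{t+h})$ along the dynamics \eqref{eqn:S} and \eqref{eqn:X}, conditioned on the realized trading rate process $a_\tau$. Taking conditional expectation kills the Brownian martingale parts, and averaging the drift of $S$ (which contains $\gamma_M a_\tau$) against $\pi_\tau(a)$ yields
\begin{eqnarray*}
\E_t[V(t+h, X_{t+h}, S_{t+h})] - V(t,x,s) = \E_t\!\left[\int_t^{t+h}\!\!\!\int \left(V_\tau + \cL V\right) \pi_\tau(a)\, da\, d\tau \right] + o(h),
\end{eqnarray*}
where $\cL$ is the generator stated in the theorem, parametrized pointwise by $a$ and $v$. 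Substituting this expansion into the DPP, dividing through by $h$, and letting $h \to 0^+$, the integrals collapse by continuity to their integrands evaluated at $\tau = t$; the left-hand side of the DPP becomes $0$ and the right-hand side becomes exactly
\begin{eqnarray*}
\max_v \min_{\pi \ll \pi^0_t} \int \left\{ V_t + \cL V + \tilde R(t,x,s,v,a) + \tfrac1\beta \log\tfrac{\pi(a)}{\pi^0_t(a)}\right\} \pi(a)\, da.
\end{eqnarray*}
The terminal condition $V(T, x, s) = G(x,s)$ is read off directly from the definition \eqref{eqn:value_func}.

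The main obstacle is rigor rather than computation. Three points require care: (i) the DPP itself must be justified in the presence of both a continuous control $v$ and a probability-measure control $\pi$, which typically requires measurable-selection arguments or a viscosity-solution framework; (ii) to legitimately exchange $\max_v$ and $\min_\pi$ with the pointwise integrand maximization inside the integral, one needs an Isaacs-type condition (which in fact holds here because the inner problem is strictly convex in $\pi$ thanks to the Kullback--Leibler regularizer, and can be solved in closed form by a Gibbs-measure argument---see also the appendix in Section \ref{sec:appendix}); and (iii) the expansion via It\^o's formula presupposes $V \in C^{1,2,2}$, which in general one can only expect in a viscosity sense. Since the statement of the theorem explicitly assumes ``enough regularity'' and since in Section \ref{sec:soln} we will exhibit classical $C^{1,2,2}$ solutions under the linear-quadratic Gaussian hypotheses, these issues can be deferred, and the formal derivation above suffices to establish \eqref{eqn:rHJI}.
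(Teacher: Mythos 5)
Your proposal is correct and follows exactly the route the paper intends: the paper explicitly omits the proof, describing it only as ``the standard dynamical programming principle argument,'' and your derivation --- Bellman--Isaacs principle on $[t,t+h]$, It\^o expansion of $V$ along the controlled dynamics with the $a$-dependent drift averaged against $\pi_\tau$, division by $h$ and passage to the limit --- is precisely that argument, with the regularity and Isaacs-condition caveats appropriately flagged and consistent with the theorem's ``enough regularity'' hypothesis. Nothing further is needed.
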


\section{Solutions in closed form} \label{sec:soln}
In this section, we show under further model assumptions that the rHJI equation \eqref{eqn:rHJI} can be reduced into a Hamilton-Jacobi equation and present, in the linear-quadratic (LQ) framework, solutions in closed form to the resulting effective Hamilton-Jacobi equation. We remark that in the following when describing a Gaussian distribution, we shall mostly use the term {\it precision}, defined by the reciprocal of variance, rather than the commonly used term of variance since it simplifies the notations a little. 

The following assumptions are imposed in order for the problem \eqref{eqn:rr-control1} to remain in a Gaussian and LQ structure though other choices are by all means possible.  
\bit
\item[{\bf A1.}] The ``{\it prior}" distribution $\pi^0_t$ is Gaussian with mean $m_t$ and precision $s_t$. Precisely, 
\[
\pi^0_t(a) = \sqrt{\frac{s_t}{2\pi}} e^{-\frac{s_t}2(a - m_t)^2}.
\]
\item[{\bf A2.}] $g_M(s) = 0$, $g(x) = -\delta x^2$. $R$ is independent of $s$ and is quadratic in $x$, $a$. Specifically,  
\[
R = R(t, x, a) = \frac{R_{xx}}2 x^2 + R_{xa} xa + \frac{R_{aa}}2 a^2 
\] 
where the coefficients may be time dependent. Moreover, the coefficient $R_{aa}$ is further assumed satisfying $s_t + \beta R_{aa} > 0$ for all $t$, where recall that $s_t$ is the precision for the prior Gaussian distribution $\pi_t^0$ given in Assumption {\bf A1}.
\item[{\bf A3.}] \label{A3} $g_M(s) = 0$, $g(x) = -\delta x^2$. $R$ is independent of $s$ and is quadratic in $v$, $a$. Specifically,  
\[
R = R(t, v, a) = \frac{R_{vv}}2 v^2 + R_{va} va + \frac{R_{aa}}2 a^2 
\] 
where the coefficients may be time dependent. Moreover, the coefficients $R_{vv}$, $R_{va}$ , and $R_{aa}$ are further assumed satisfying
\beaa
s_t + \beta R_{aa} > 0 \quad \mbox{and} \quad \eta - \frac{R_{vv}}{2} + \frac{\beta R_{va}^2}{2(s_t + \beta R_{aa})} > 0
\eeaa
for all $t$.

\eit

We briefly explain the financial rationale for the assumptions {\bf A2} and {\bf A3} on $R$ being quadratic.
Concerning {\bf A2}, as one observes in \eqref{eqn:pnl0-1} that the $x^2$ term may result from the quadratic variation of P\&L $\Pi^0_t$. For the $xa$ term, consider the following {\it volume-weighted averaged efficient price} for the trading trajectory $X_t$.
\begin{eqnarray}
&& \frac{1}{X_0}\int_0^T S_t dX_t 
= \frac{1}{X_0}\left\{S_TX_T - X_0 S_0 - \int_0^T X_t dS_t - [X, S]_T \right\} \nonumber \\
&\approx& -S_0 + \frac{1}{X_0}\left\{ - \gamma \int_0^T X_t dX_t - \gamma_M \int_0^T X_t a_t dt - \int_0^T X_t \sigma^S dW^S_t - \int_0^T \rho\sigma^S \sigma^X dt \right\} \nonumber \\
&\approx& -S_0 + \frac{1}{X_0}\left\{ - \gamma \int_0^T X_t dX_t - \gamma_M \int_0^T X_t a_t dt - \int_0^T X_t \sigma^S dW^S_t\right\} \nonumber \\
&\approx& -S_0 + \frac\gamma2 X_0 - \frac{\gamma_M}{X_0} \int_0^T X_t a_t dt - \frac1X_0 \int_0^T X_t \sigma^S dW^S_t. \label{eqn:vwaep}
\end{eqnarray} 
Hence, apart from the constants and the martingale term in \eqref{eqn:vwaep}, if the agent is subject to minimize the volume-weighted averaged efficient price of his strategy, he may consider penalizing his final P\&L by a factor of the third term in \eqref{eqn:vwaep}, which is a special case of {\bf A2}.   
As for {\bf A3}, consider the situation where the order execution agent is required to track a percentage-of-volume (POV) strategy as a benchmark. That is, with a given market participant rate $\nu > 0$, at any point in time he would like his trading $v_t$ to stay close to $\nu a_t$, should the market trading rate be $a_t$. In this case, he may penalize his P\&L by the quantity $\lambda |v - \nu a|^2$, where $\lambda > 0$ is a risk aversion parameter, which indeed is a special case for $R$ in {\bf A3}.

In both {\bf A2} and {\bf A3}, $R$ is assumed independent of price $S_t$. The rationale is that risk, be it of price or of execution, is considered closely related with the relative price movements, which originate from the agent's and the market's trading activities, rather than the absolute price level. We note that, since under these assumptions the problem \eqref{eqn:rr-control1} and hence its associated value function do not depend on $S_t$ any more, the SDE for $S_t$ can be discharged from the problem \eqref{eqn:rr-control1} and thus the infinitesimal generator $\cL$ in the rHJI equation \eqref{eqn:rHJI} reduces to $\cL = \frac{(\sigma^X)^2}2 \p_x^2 + v \p_x$. 

\subsection{Model 1} \label{sec:model-1}
In this subsection, we show that under Assumptions {\bf A1} and {\bf A2}, which we refer to as {\it Model 1}, the rHJI equation \eqref{eqn:rHJI} can be reduced to an effective Hamilton-Jacobi equation that admits a solution in semi-analytical form subject to solving a system of ODEs. Further assuming time independent coefficients, the solution to the aforementioned system of ODEs can be obtained in closed form, yielding closed form expressions for the optimal trading rate and the optimal posterior distribution for market trading rate. 

The following lemma is crucial in the calculations involved in reducing the rHJI equations into effective Hamilton-Jacobi equations and for the proof of duality in Section \ref{sec:duality}. 
\begin{lemma} \label{lma:crucial}
For a distribution $\pi$ satisfying $\pi \ll \pi_t^0$, consider the functional
\[
F[\pi] := \int \left\{ \frac{C_2}2 a^2 + C_1 a + \frac1\beta \log \frac{\pi(a)}{\pi_t^0(a)}\right\} \pi(a) da.
\]
where $C_1$ and $C_2$ are constants such that $s_t + \beta C_2 > 0$. 
Then, $F$ is convex in $\pi$ and its minimum is achieved uniquely at the distribution $\pi^*$ whose probability density function is given by 
\begin{equation}
\pi_t^*(a) = \sqrt{\frac{s_t + \beta C_2}{2\pi}}e^{-\frac{(s_t + \beta C_2)(a-w^*)^2}{2}}, \quad w^* = \frac{s_t m_t - \beta C_1}{s_t + \beta C_2},  \label{eqn:minimizer_pi}
\end{equation} 
i.e., $\pi^*$ is a normal density with mean $w^*$ in \eqref{eqn:minimizer_pi} and precision $s_t + \beta C_2$. The minimum value is given by
\begin{eqnarray}
&& F[\pi^*] = \frac{1}{2\beta} \log\frac{s_t + \beta C_2}{s_t} + \frac{s_t m_t^2}{2\beta} - \frac1{2\beta} \frac{(s_t m_t - \beta C_1)^2}{s_t + \beta C_2}.  \label{eqn:min_value}
\end{eqnarray}
\end{lemma}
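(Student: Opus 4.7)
The plan is to recognize $F[\pi]$, up to an additive constant, as $1/\beta$ times a Kullback--Leibler divergence against a specific Gaussian reference measure. Once this is done, nonnegativity of KL divergence delivers both existence and uniqueness of the minimizer, and computation of the normalizing constant delivers the minimum value.

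Concretely, I would first absorb the linear-quadratic term into the logarithm by writing
\[
\frac{C_2}{2} a^2 + C_1 a = -\frac{1}{\beta} \log e^{-\beta\left(\frac{C_2}{2} a^2 + C_1 a\right)},
\]
which recasts the functional as
\[
F[\pi] = \frac{1}{\beta} \int \log \frac{\pi(a)}{\pi_t^0(a)\, e^{-\beta\left(\frac{C_2}{2} a^2 + C_1 a\right)}}\, \pi(a)\, da.
\]
Then I would complete the square in the product $\pi_t^0(a)\, e^{-\beta(\frac{C_2}{2} a^2 + C_1 a)}$: its log-density equals $-\frac{s_t}{2}(a - m_t)^2 - \beta(\frac{C_2}{2} a^2 + C_1 a)$, which (using the hypothesis $s_t + \beta C_2 > 0$) rearranges into $-\frac{s_t + \beta C_2}{2}(a - w^*)^2$ plus a constant, with $w^*$ exactly the proposed mean. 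This identifies the product, after normalization, with the claimed Gaussian $\pi_t^*$ of precision $s_t + \beta C_2$ and mean $w^*$. Denoting the normalizing constant by $Z$, we have $\pi_t^0(a)\, e^{-\beta(\frac{C_2}{2} a^2 + C_1 a)} = Z\, \pi_t^*(a)$, so
\[
F[\pi] = \frac{1}{\beta}\, \mathrm{KL}(\pi \,\|\, \pi_t^*) - \frac{1}{\beta} \log Z.
\]

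At this point convexity of $F$ in $\pi$ follows from the well-known strict convexity of $\mathrm{KL}(\cdot \,\|\, \pi_t^*)$ on the convex set of probability densities absolutely continuous with respect to $\pi_t^0$ (equivalently $\pi_t^*$, since the two are mutually absolutely continuous). Nonnegativity of KL, with equality iff the two arguments coincide, shows that the unique minimizer is $\pi^* = \pi_t^*$, establishing \eqref{eqn:minimizer_pi}, and the minimum value equals $-\frac{1}{\beta}\log Z$.

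The only remaining work, which I do expect to be the main bookkeeping obstacle, is to compute $Z$ explicitly and simplify. One collects the constants produced when completing the square: the constant term is $\frac{s_t m_t^2}{2} - \frac{(s_t + \beta C_2)(w^*)^2}{2}$, which after substitution of $w^* = (s_t m_t - \beta C_1)/(s_t + \beta C_2)$ reduces to $\frac{s_t m_t^2}{2} - \frac{(s_t m_t - \beta C_1)^2}{2(s_t + \beta C_2)}$; the Gaussian integrals contribute the factor $\sqrt{s_t/(s_t + \beta C_2)}$, whose log gives the $\frac{1}{2\beta}\log\frac{s_t + \beta C_2}{s_t}$ term in \eqref{eqn:min_value}. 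Assembling these pieces yields the stated expression for $F[\pi^*]$, completing the proof.
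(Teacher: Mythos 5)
Your proof is correct and is essentially the paper's own argument in a slightly tidier packaging: the identity $F[\pi] = \tfrac{1}{\beta}\,\mathrm{KL}(\pi \,\|\, \pi_t^*) - \tfrac{1}{\beta}\log Z$ is exactly the paper's add-and-subtract of $\tfrac1\beta\log\frac{\pi^*}{\pi_t^0}$ followed by the observation that the remaining integrand is independent of $\pi$, and both the convexity claim and the uniqueness of the minimizer ultimately rest on the same facts (convexity of $x\log x$ and nonnegativity of relative entropy), with the Gaussian integral for $Z$ reproducing the paper's completion of the square. No gaps.
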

\begin{proof}
We first show that $F$ is convex. For any given distributions $\pi_1, \pi_2 \ll \pi_t^0$ and $\lambda \in [0, 1]$, let $\pi = \lambda \pi_1 + (1 - \lambda)\pi_2$. We have
\begin{eqnarray*}
&& F[\pi] = F[\lambda \pi_1 + (1 - \lambda)\pi_2] \\
&=& \int \left\{ \frac{C_2}2 a^2 + C_1 a + \frac1\beta \log \frac{\pi(a)}{\pi_t^0(a)}\right\} \pi(a) da \\
&=& \int \left\{ \frac{C_2}2 a^2 + C_1 a \right\}\{\lambda \pi_1(a) + (1 - \lambda)\pi_2(a)\} da +  \int \frac1\beta \frac{\pi(a)}{\pi_t^0(a)}\log \frac{\pi(a)}{\pi_t^0(a)} \pi_t^0(a) da \\
&\leq& \lambda \int \left\{ \frac{C_2}2 a^2 + C_1 a \right\} \pi_1(a) da + (1 - \lambda) \int \left\{ \frac{C_2}2 a^2 + C_1 a \right\} \pi_2(a) da \\
&& + \lambda \int \frac1\beta \frac{\pi_1(a)}{\pi_t^0(a)}\log \frac{\pi_1(a)}{\pi_t^0(a)} \pi_t^0(a) da 
+ (1 - \lambda) \int \frac1\beta \frac{\pi_2(a)}{\pi_t^0(a)}\log \frac{\pi_2(a)}{\pi_t^0(a)} \pi_t^0(a) da \\
&=& \lambda F[\pi_1] + (1 - \lambda) F[\pi_2],
\end{eqnarray*}
where the inequality results from the fact that the function $x\log x$ is convex. We thus conclude that $F$ is convex. 

As for the minimum value, note that for any $\pi\ll\pi_t^0$ we have 
\begin{eqnarray*}
&& \int \left\{ \frac{C_2}{2}a^2 + C_1 a + \frac{1}{\beta}\log\frac{\pi(a)}{\pi_t^0 (a)}  \right\} \pi(a) da \\
&=& \int \left\{ \frac{C_2}{2}a^2 + C_1 a + \frac{1}{\beta}\log\frac{\pi^*(a)}{\pi_t^0 (a)} \right\} \pi(a) da + \frac1\beta \int \log\frac{\pi(a)}{\pi^* (a)} \pi(a) da \\
&\geq & \int \left\{ \frac{C_2}{2}a^2 + C_1 a + \frac{1}{\beta}\log\frac{\pi^*(a)}{\pi_t^0 (a)} \right\} \pi(a) da \\
&=& \left[C_1 + \frac{s_t + \beta C_2}{\beta}w^* - \frac{s_t m_t}{\beta} \right] \inn{a}_t 
 + \frac{1}{2\beta} \log\frac{s_t + \beta C_2}{s_t} + \frac{s_t m_t^2}{2\beta} - \frac{s_t + \beta C_2}{2\beta} (w^*)^2 \\
&=& \frac{1}{2\beta} \log\frac{s_t + \beta C_2}{s_t} + \frac{s_t m_t^2}{2\beta} - \frac1{2\beta} \frac{(s_t m_t - \beta C_1)^2}{s_t + \beta C_2} \\
&=& \int \left\{ \frac{C_2}{2}a^2 + C_1 a + \frac{1}{\beta}\log\frac{\pi_t^*(a)}{\pi_t^0 (a)}  \right\} \pi_t^*(a) da,
\end{eqnarray*}
where the inequality results from the positivity of relative entropy. 
Thus, the minimum value of is achieved at $\pi^*$ in \eqref{eqn:minimizer_pi} and the minimum value is given by \eqref{eqn:min_value}.
\end{proof}

\begin{lemma} \label{lma:inner-min-value}
In Model 1, the minimal value for the inner minimization in \eqref{eqn:rHJI} is equal to  
\begin{eqnarray}
&& \!\!\! V_t + \frac{(\sigma^X)^2}2 V_{xx} + v V_x + \frac\gamma2(\sigma^X)^2 + \rho\sigma^S \sigma^X - \eta v^2 -\frac1{2\beta} \log \left(\frac{s_t}{s_t + \beta R_{aa}}\right) 
\label{eqn:inner-min-value} \\
&+& \frac12\left( R_{xx} - \frac{\beta (\gamma_M + R_{xa})^2}{s_t+\beta R_{aa}}\right) x^2 + \frac{(\gamma_M + R_{xa})s_t m_t}{s_t+\beta R_{aa}} x 
+ \frac{s_t m_t^2 R_{aa}}{2(s_t+\beta R_{aa})}. \nonumber
\end{eqnarray}
\end{lemma}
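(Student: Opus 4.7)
The plan is to apply Lemma \ref{lma:crucial} to the inner $\pi$-minimization in the rHJI equation \eqref{eqn:rHJI} under the Model 1 assumptions, after isolating those terms in the integrand that depend on $a$.

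First I would specialize the integrand of \eqref{eqn:rHJI} to Model 1. Under Assumptions \textbf{A1} and \textbf{A2} the value function is independent of $s$, so the generator collapses to $\cL V = \frac{(\sigma^X)^2}{2} V_{xx} + v V_x$, and the running term becomes
\[
\tilde R(t,x,v,a) = \gamma_M xa + \tfrac{\gamma}{2}(\sigma^X)^2 + \rho\sigma^S\sigma^X - \eta v^2 + \tfrac{R_{xx}}{2}x^2 + R_{xa}\,xa + \tfrac{R_{aa}}{2}a^2.
\]
Collecting only the $a$-dependent pieces, the quantity to be minimized over $\pi \ll \pi_t^0$ is
\[
\int\!\left\{\tfrac{R_{aa}}{2}a^2 + (\gamma_M + R_{xa})x\,a + \tfrac{1}{\beta}\log\tfrac{\pi(a)}{\pi_t^0(a)}\right\}\pi(a)\,da,
\]
plus a term independent of $a$ (i.e., $V_t + \cL V + \frac{\gamma}{2}(\sigma^X)^2 + \rho\sigma^S\sigma^X - \eta v^2 + \frac{R_{xx}}{2}x^2$) that the $\pi$-minimization leaves untouched since $\pi$ is a probability density.

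Next I would invoke Lemma \ref{lma:crucial} with the identification $C_2 = R_{aa}$ and $C_1 = (\gamma_M + R_{xa})x$. The hypothesis $s_t + \beta C_2 > 0$ is exactly the condition $s_t + \beta R_{aa} > 0$ in Assumption \textbf{A2}, so the lemma applies and produces the minimum value
\[
\frac{1}{2\beta}\log\frac{s_t+\beta R_{aa}}{s_t} + \frac{s_t m_t^2}{2\beta} - \frac{1}{2\beta}\,\frac{\bigl(s_t m_t - \beta(\gamma_M + R_{xa})x\bigr)^{2}}{s_t + \beta R_{aa}}.
\]
Adding back the $a$-independent remainder yields the inner minimum.

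The only remaining task is algebraic: expand the square $\bigl(s_t m_t - \beta(\gamma_M + R_{xa})x\bigr)^2$, separate it into $x^2$, $x$, and constant pieces, and combine the constant piece with $\frac{s_t m_t^2}{2\beta}$. The cancellation
\[
\frac{s_t m_t^2}{2\beta} - \frac{s_t^2 m_t^2}{2\beta(s_t+\beta R_{aa})} \;=\; \frac{s_t m_t^2 R_{aa}}{2(s_t+\beta R_{aa})}
\]
and the rewriting $\frac{1}{2\beta}\log\frac{s_t+\beta R_{aa}}{s_t} = -\frac{1}{2\beta}\log\frac{s_t}{s_t+\beta R_{aa}}$ recover the coefficients of $x^2$, $x$ and the constant term exactly as displayed in \eqref{eqn:inner-min-value}. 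I do not expect any genuine obstacle here; the main thing to be careful about is bookkeeping the signs when expanding the squared term and verifying that the factor of $\beta$ in the numerator of the $x^2$-coefficient comes out correctly.
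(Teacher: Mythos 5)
Your proof is correct, and the algebra checks out: with $C_2 = R_{aa}$ and $C_1 = (\gamma_M + R_{xa})x$ the formula \eqref{eqn:min_value} expands exactly to the $x^2$, $x$, and constant terms of \eqref{eqn:inner-min-value}, and the $a$-independent part of the integrand indeed passes through the minimization unchanged because $\int\pi\,da=1$. Your route differs from the paper's in one respect worth noting. The paper first obtains the candidate minimizer \eqref{eqn:optimal_pi} from the variational first-order condition, expresses the corresponding value as the log-partition-function $-\frac1\beta\log\int\pi_t^0(a)e^{-\beta[V_t+\cL V+\tilde R]}\,da$ as in \eqref{eqn:rrHJI}, evaluates that Gaussian integral explicitly by completing the square in the exponent, and only at the end invokes Lemma \ref{lma:crucial} (with the same identification of $C_1, C_2$) to certify that the candidate is genuinely the unique minimizer. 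You instead use Lemma \ref{lma:crucial} for both jobs at once: it already delivers the unique minimizer and the closed-form minimum value \eqref{eqn:min_value}, so the only work left is separating the $a$-dependent terms and expanding one square. Your version is shorter and avoids recomputing a Gaussian integral that is implicitly contained in the proof of Lemma \ref{lma:crucial}; what the paper's version buys is the explicit exponential/Gibbs representation of the optimal posterior, which connects to the general rHJB machinery in the appendix. Either way the optimality is justified by convexity via Lemma \ref{lma:crucial}, so there is no gap.
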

\begin{proof}
For any given trading rate $v_t$ and function $V$, the first order criterion via variational calculus applied to the inner minimization in \eqref{eqn:rHJI}, see also \eqref{eqn:optimal-pi} and \eqref{eqn:rHJB-reduced} in the appendix, yields that the candidate minimizer $\pi_t^*$ for the minimization in the rHJI equation \eqref{eqn:rHJI} is given by 
\begin{equation}
\pi_t^*(a) = \sqrt{\frac{s_t + \beta R_{aa}}{2\pi}}e^{-\frac{(s_t + \beta R_{aa})(a-w^*)^2}{2}}, \quad w^* = \frac{-\beta (\gamma_M + R_{xa}) x + s_t m_t}{s_t + \beta R_{aa}},  \label{eqn:optimal_pi}
\end{equation} 
since $s_t + \beta R_{aa} > 0$, and its corresponding value is equal to
\begin{equation}
-\frac1\beta \log \int \pi^0_t(a|x) e^{-\beta \left[V_t + \cL V + \tilde R(t, x, a) \right]} da. \label{eqn:rrHJI}
\end{equation}

We evaluate the integral in \eqref{eqn:rrHJI} as follows. 
\begin{eqnarray*}
&& \int \pi^0_t(a|x) e^{-\beta \left\{V_t + \cL V + \tilde R(t, x, a) \right\}} da \\
&=& \int e^{-\beta\left\{V_t + \cL V + \frac\gamma2(\sigma^X)^2 + \rho\sigma^S \sigma^X - \eta v^2 + \frac12 R_{xx} x^2 + (\gamma_M + R_{xa}) xa + \frac12 R_{aa} a^2 \right\}} \sqrt{\frac{s_t}{2\pi}}e^{-\frac{s_t}2 (a-m_t)^2} da \\
&=& e^{-\beta\left\{V_t + \frac{(\sigma^X)^2}2 V_{xx} + v V_x + \frac\gamma2(\sigma^X)^2 + \rho\sigma^S \sigma^X - \eta v^2 + \frac12 R_{xx} x^2\right\}} \\
&& \quad \times \int e^{-\beta\left\{(\gamma_M + R_{xa}) xa + \frac12 R_{aa} a^2 \right\}} \sqrt{\frac{s_t}{2\pi}}e^{-\frac{s_t}2 (a-m_t)^2} da \\
&=& e^{-\beta\left\{V_t + \frac{(\sigma^X)^2}2 V_{xx} + v V_x + \frac\gamma2(\sigma^X)^2 + \rho\sigma^S \sigma^X - \eta v^2 \right\}} \\
&& \quad \times \sqrt{\frac{s_t}{s_t + \beta R_{aa}}} e^{-\frac\beta2\left\{R_{xx} + \frac{-\beta (\gamma_M + R_{xa})^2}{s_t+\beta R_{aa}}\right\} x^2 +\frac{-\beta (\gamma_M + R_{xa})s_t m_t}{s_t+\beta R_{aa}} x - \frac{\beta s_t m_t^2 R_{aa}}{2(s_t+\beta R_{aa})} }.
\end{eqnarray*}
Hence, by taking logarithm of the last equation and multiplying the resulting expression by the factor $-\frac1\beta$, we get the desired quantity.

The $\pi_t^*$ in \eqref{eqn:optimal_pi} is indeed a minimizer for the minimization in \eqref{eqn:rHJI} since, for any admissible distribution $\pi_t$, the objective function, after omitting all the terms independent of $a$, equals
\begin{eqnarray}\nonumber
&& \int \left\{ (\gamma_M + R_{xa})xa + \frac{R_{aa}}{2}a^2 + \frac{1}{\beta}\log\frac{\pi_t(a)}{\pi_t^0 (a)}  \right\} \pi_t(a) da.
\end{eqnarray}
Hence, with $C_2 = R_{aa}$ and $C_1 = (\gamma_M + R_{xa})x$, Lemma \ref{lma:crucial} implies that $\pi_t^*$ is the unique minimizer. 
\end{proof}

Next, we move on to deal with the outer maximization in \eqref{eqn:rHJI} and show that it can be further reduced to the Hamilton-Jacobi equation \eqref{eqn:HJ}. 
\begin{lemma} \label{lma:HJ-1}
The rHJI equation \eqref{eqn:rHJI} is reduced to the following Hamilton-Jacobi equation
\begin{eqnarray}
&& V_t + \frac{(\sigma^X)^2}2 V_{xx} + \frac{V_x^2}{4\eta}
- \frac1{2\beta} \log \left(\frac{s_t}{s_t + \beta R_{aa}}\right) + \frac\gamma2(\sigma^X)^2 + \rho\sigma^S \sigma^X  \label{eqn:HJ} \\
&& \qquad + \frac12\left\{ R_{xx} - \frac{\beta (\gamma_M + R_{xa})^2}{s_t+\beta R_{aa}}\right\} x^2 + \frac{(\gamma_M + R_{xa})s_t m_t}{s_t+\beta R_{aa}} x \nonumber \\
&& \qquad + \frac{s_t m_t^2 R_{aa}}{2(s_t+\beta R_{aa})} = 0 \nonumber
\end{eqnarray}
with terminal condition $V(T, x) = -g x^2$, where $g = \delta - \frac\gamma2$.
\end{lemma}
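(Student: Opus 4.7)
My plan is to exploit the fact that Lemma~\ref{lma:inner-min-value} has already carried out the inner minimization: it provides the explicit formula \eqref{eqn:inner-min-value} for the minimized integrand as a function of $(t,x,v,V,V_t,V_x,V_{xx})$. The only remaining task is to perform the outer maximization over $v$ in the rHJI equation \eqref{eqn:rHJI} and then verify the terminal condition.

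For the outer maximization, I would inspect \eqref{eqn:inner-min-value} and observe that the only $v$-dependent terms are $v V_x - \eta v^2$, which, since $\eta>0$, is a strictly concave quadratic in $v$ with unconstrained maximizer
\[
v^* \;=\; \frac{V_x}{2\eta}
\]
and maximum value $V_x^2/(4\eta)$. Substituting this back into \eqref{eqn:inner-min-value} and setting the resulting expression equal to zero is then a direct algebraic step that yields exactly \eqref{eqn:HJ}; no terms other than $v V_x - \eta v^2$ change, so the remaining entropy, impact and risk terms carry over verbatim.

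For the terminal condition, I would recall from the definition preceding \eqref{eqn:rr-control1} that $V(T,x,s)=G(x,s)=g(x)+\tfrac\gamma2 x^2+g_M(s)$. Under Assumption \textbf{A2} we have $g_M\equiv 0$ and $g(x)=-\delta x^2$, so $V(T,x,s)=-(\delta-\tfrac\gamma2)x^2=-gx^2$ with $g:=\delta-\tfrac\gamma2$. Because $R$ is independent of $s$ and the terminal data is $s$-free, the $s$-variable can be suppressed throughout: $V$ depends only on $(t,x)$ and the generator collapses to $\cL=\tfrac{(\sigma^X)^2}{2}\p_x^2+v\,\p_x$, as already remarked in the paragraph introducing Assumptions \textbf{A2} and \textbf{A3}.

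The main obstacle, if one exists, is purely notational bookkeeping: one must be careful that legitimately exchanging $\max_v$ and $\min_\pi$ at the integrand level is permissible here. This is the case because, for each fixed $v$, Lemma~\ref{lma:crucial} (invoked in the proof of Lemma~\ref{lma:inner-min-value}) produces a unique minimizer $\pi^*_t$ with a closed-form value function that is smooth and concave in $v$, so the outer maximization is classical. Admissibility of the candidate maximizer $v^*=V_x/(2\eta)$ in $\cV$ is not needed for the formal PDE reduction stated in Lemma~\ref{lma:HJ-1} and will be addressed by the verification arguments of Section~\ref{sec:verification-duality}.
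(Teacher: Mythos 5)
Your proposal is correct and follows essentially the same route as the paper: invoke Lemma~\ref{lma:inner-min-value} for the inner minimization, observe that the only $v$-dependence in \eqref{eqn:inner-min-value} is the concave quadratic $vV_x - \eta v^2$ maximized at $v^* = V_x/(2\eta)$ with value $V_x^2/(4\eta)$, and substitute back. The only superfluous element is your concern about interchanging $\max_v$ and $\min_\pi$ --- the rHJI equation \eqref{eqn:rHJI} is already posed in the order $\max_v\min_\pi$, so no interchange is actually performed.
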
 
\begin{proof}
Lemma \ref{lma:inner-min-value} showed that the value of the inner minimization in \eqref{eqn:rHJI} is equal to \eqref{eqn:inner-min-value}. Thus, \eqref{eqn:rHJI} reduces to 
\begin{eqnarray}
&& \max_v \left\{ V_t + \frac{(\sigma^X)^2}2 V_{xx} + v V_x + \frac\gamma2(\sigma^X)^2 + \rho\sigma^S \sigma^X - \eta v^2 -\frac1{2\beta} \log \left(\frac{s_t}{s_t + \beta R_{aa}}\right) \right. \nonumber \\
&& \qquad + \frac12\left( R_{xx} - \frac{\beta (\gamma_M + R_{xa})^2}{s_t+\beta R_{aa}}\right) x^2 + \frac{(\gamma_M + R_{xa})s_t m_t}{s_t+\beta R_{aa}} x \nonumber \\
&& \left. \qquad + \frac{s_t m_t^2 R_{aa}}{2(s_t+\beta R_{aa})}  \right\} = 0. \label{eqn:outer-max}
\end{eqnarray}
The expression between the brackets is quadratic and concave in $v$, it admits a unique maximal value at the vertex. The first order criterion thus implies that the maximizer $v^*$ is given by $v^* = \frac{V_x}{2\eta}$. The proof is then completed by plugging the maximizer $v^*$ into \eqref{eqn:outer-max}.
\end{proof}

\begin{theorem} \label{thm:ODE1}
In Model 1, the value function $V$ for the control problem \eqref{eqn:rr-control1} is quadratic in $x$, say, $V(t, x) = \frac12 H_2 x^2 + H_1 x + H_0$, where the time dependent coefficients $H_2$, $H_1$, and $H_0$ satisfy the following system of equations
\begin{eqnarray*}
&& \dot H_2 + \frac{H_2^2}{2\eta} + A_1 = 0, \\
&& \dot H_1 + \frac{H_1H_2}{2\eta} + B_1 = 0, \\
&& \dot H_0 + \frac{(\sigma^X)^2}2 H_2 + \frac{H_1^2}{4\eta} + \frac\gamma2 (\sigma^X)^2 + \rho\sigma^S \sigma^X - \frac1{2\beta}\log c - \frac{s_t}{2\beta} m_t^2 (c - 1) = 0
\end{eqnarray*}
with terminal conditions $H_2(T) = -2g$ and $H_1(T) = H_0(T) = 0$. $A_1$, $B_1$, and $c$ are defined by
\begin{eqnarray*}
&& A_1 = R_{xx} -\frac{\beta (\gamma_M + R_{xa})^2}{s_t+\beta R_{aa}}, \qquad
B_1 = \frac{(\gamma_M + R_{xa})s_t m_t}{s_t+\beta R_{aa}}, \quad
c = \frac{s_t}{s_t + \beta R_{aa}}.
\end{eqnarray*}
\end{theorem}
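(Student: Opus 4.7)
My plan is to prove Theorem \ref{thm:ODE1} by direct substitution of a quadratic ansatz into the Hamilton--Jacobi equation produced in Lemma \ref{lma:HJ-1}, then matching coefficients of the powers of $x$. Because Lemma \ref{lma:HJ-1} already eliminated both the inner minimization over $\pi$ and the outer maximization over $v$, the remaining object is a deterministic PDE in $(t,x)$ whose inhomogeneous terms are at most quadratic in $x$ with time-dependent coefficients. This is precisely the setting in which a quadratic ansatz is self-consistent, so the plan should close without any functional-analytic subtleties.

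Concretely, I would posit $V(t,x) = \tfrac12 H_2(t) x^2 + H_1(t) x + H_0(t)$ and compute $V_t = \tfrac12 \dot H_2 x^2 + \dot H_1 x + \dot H_0$, $V_x = H_2 x + H_1$, and $V_{xx} = H_2$. Substituting into \eqref{eqn:HJ}, the only nonlinear term is $V_x^2/(4\eta) = H_2^2 x^2/(4\eta) + H_2 H_1 x/(2\eta) + H_1^2/(4\eta)$, which is still polynomial of degree at most two in $x$. I would then group terms by power of $x$: the coefficient of $x^2$ yields $\tfrac12 \dot H_2 + H_2^2/(4\eta) + \tfrac12 A_1 = 0$, i.e., the Riccati equation for $H_2$; the coefficient of $x$ yields the linear equation $\dot H_1 + H_1 H_2/(2\eta) + B_1 = 0$; and the constant term yields the equation for $H_0$, after rewriting the residual constant $\frac{s_t m_t^2 R_{aa}}{2(s_t + \beta R_{aa})}$ from \eqref{eqn:HJ} as $-\frac{s_t}{2\beta} m_t^2 (c-1)$ using $c-1 = -\beta R_{aa}/(s_t + \beta R_{aa})$.

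The terminal condition in Lemma \ref{lma:HJ-1} is $V(T,x) = -g x^2$ with $g = \delta - \gamma/2$. Matching the quadratic, linear, and constant parts at $t=T$ immediately gives $H_2(T) = -2g$, $H_1(T) = 0$, and $H_0(T) = 0$, which are exactly the boundary conditions claimed. The structural conclusion that $V$ remains of the assumed quadratic form for all $t<T$ follows because the derived ODE system has, at least locally backward from $T$, a classical solution: the Riccati equation for $H_2$ is scalar and standard, then $H_1$ and $H_0$ solve linear ODEs driven by $H_2$ and the given time-dependent coefficients.

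The only part requiring care is justifying that the quadratic ansatz captures \emph{the} value function rather than merely producing a candidate. I would expect to defer this uniqueness-of-solution issue to the verification theorem advertised in Section \ref{sec:verification-duality}; for the present theorem, it suffices to observe that any classical solution of \eqref{eqn:HJ} with the given terminal data and polynomial growth in $x$ must coincide with the quadratic solution constructed above, since the difference would solve a linear backward parabolic equation with zero terminal data. Hence the main obstacle is not algebraic but conceptual: confirming that the PDE \eqref{eqn:HJ} admits a unique sufficiently regular solution so that identifying coefficients is legitimate; every remaining step is a routine polynomial identification.
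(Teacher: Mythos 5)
Your proposal is correct and follows essentially the same route as the paper: substitute the quadratic ansatz into the Hamilton--Jacobi equation \eqref{eqn:HJ} from Lemma \ref{lma:HJ-1} and match coefficients of $x^2$, $x$, and $1$, with the identity $c-1=-\beta R_{aa}/(s_t+\beta R_{aa})$ reconciling the constant term. Your additional remarks on terminal conditions and on deferring the identification of the candidate with the true value function to the verification theorem are sound but go slightly beyond what the paper's (very terse) proof records.
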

\begin{proof}
Assume the ansatz for value function
\[
V(t, x) = \frac12 H_2 x^2 + H_1 x + H_0, 
\]
where the $H_i$'s are functions of $t$. Substitute the ansatz for value function to the Hamilton-Jacobi equation \eqref{eqn:HJ} then comparing the coefficients yield the following system of ODEs satisfied by the $H_i$'s
\begin{eqnarray*}
x^2 &:& \dot H_2 + \frac{H_2^2}{2\eta} + A_1 = 0, \\
x &:& \dot H_1 + \frac{H_1H_2}{2\eta} + B_1 = 0, \\
1 &:& \dot H_0 + \frac{(\sigma^X)^2}2 H_2 + \frac{H_1^2}{4\eta} + \frac\gamma2 (\sigma^X)^2 + \rho\sigma^S \sigma^X - \frac1{2\beta}\log c - \frac{s_t}{2\beta} m_t^2 (c - 1) = 0.
\end{eqnarray*}
\end{proof}

We note that, since the coefficients in the system of ODEs in Theorem \ref{thm:ODE1} may be time dependent, in general it does not admit solution in closed form. However, if we further assume the coefficients being constant, the system of ODEs does admit a solution in closed form. We summarize the result in the following corollary.
\begin{corollary}
Further assume that all the coefficients in the system of ODEs in Theorem \ref{thm:ODE1} are constant and let 
\begin{eqnarray*}
&& \hat A_1 = \sqrt{\frac{-A_1}{2\eta}},
\end{eqnarray*}
assuming $A_1 < 0$. The functions $H_2$ and $H_1$ have the following closed form expressions
\begin{eqnarray*}
&& H_2(t) = -2\eta \hat A_1 \coth\left( \hat A_1\{T - t\} + \alpha_1 \right), \\
&& H_1(t) = - \frac{B_1}{\hat A_1} \frac{\cosh\alpha_1}{\sinh\left( \hat A_1\{T - t\} + \alpha_1 \right)} + \frac{B_1}{\hat A_1} \coth\left( \hat A_1\{T - t\} + \alpha_1 \right),
\end{eqnarray*}
where $\alpha_1 = \coth^{-1}\left(\frac{g}{\eta \hat A_1}\right)$. It follows that the optimal trading strategy $v_t^*$ is obtained by
\begin{eqnarray}
v_t^* &=& \frac1{2\eta}(H_2 x_t + H_1) \label{eqn:vst_feedback} \\
&=& -\hat A_1 \coth\left( \hat A_1\{T - t\} + \alpha_1\right) x_t  - \frac{B_1}{2\eta\hat A_1}\frac{\cosh(\alpha_1) - \cosh(\hat A_1\{T - t\} + \alpha_1)}{\sinh\left( \hat A_1\{T - t\} + \alpha_1 \right)}  \nonumber
\end{eqnarray} 
and the expected optimal trading trajectory $x_t^*$ by 
\begin{eqnarray}
x^*_t &=& \frac{\sinh\left(\hat A_1\{T - t\} + \alpha_1 \right)}{\sinh\left( \hat A_1 T + \alpha_1 \right)} x_0 \label{eqn:optimal-x-1} \\
&& \quad + \sinh\left( \hat A_1\{T - t\} + \alpha_1 \right) \int_0^t \frac{H_1(s)}{2\eta \sinh\left( \hat A_1\{T - s\} + \alpha_1 \right)} ds. \nonumber
\end{eqnarray}
\end{corollary}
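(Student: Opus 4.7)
My plan is to solve the three ODEs in Theorem \ref{thm:ODE1} sequentially in the constant-coefficient regime, and then read off $v_t^\ast$ and $x_t^\ast$ from the feedback law already established in Lemma \ref{lma:HJ-1} and the state dynamics \eqref{eqn:X}.

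First I would tackle the Riccati equation $\dot H_2 + H_2^2/(2\eta) + A_1 = 0$. Because $A_1<0$, it is natural to write $A_1 = -2\eta\hat A_1^{\,2}$ and rescale $H_2 = -2\eta\hat A_1\,\Phi$, which turns the equation into $\dot\Phi = \hat A_1(\Phi^2-1)$. The latter is autonomous and separable, and is solved explicitly by $\Phi(t) = \coth(\hat A_1(T-t)+\alpha_1)$ for an integration constant $\alpha_1$; alternatively, this can be verified directly using $\coth^2 - \mathrm{csch}^2 = 1$. The terminal condition $H_2(T) = -2g$ then forces $\coth\alpha_1 = g/(\eta\hat A_1)$, i.e., $\alpha_1 = \coth^{-1}(g/(\eta\hat A_1))$, which matches the stated formula.

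Next I would solve the linear equation $\dot H_1 + H_1 H_2/(2\eta) + B_1 = 0$ by the integrating-factor method. Since $H_2/(2\eta) = -\hat A_1\coth(\hat A_1(T-t)+\alpha_1)$, a primitive of this coefficient with respect to $t$ is $\log\sinh(\hat A_1(T-t)+\alpha_1)$, so the appropriate integrating factor is $1/\sinh(\hat A_1(T-t)+\alpha_1)$. Dividing through and integrating from $t$ to $T$, using $H_1(T)=0$, gives
\begin{equation*}
H_1(t) = B_1\sinh(\hat A_1(T-t)+\alpha_1)\int_t^T \frac{ds}{\sinh(\hat A_1(T-s)+\alpha_1)}.
\end{equation*}
The substitution $u = \hat A_1(T-s)+\alpha_1$ reduces the integral to $\int\mathrm{csch}\,u\,du = \log|\tanh(u/2)|$, which, after using the half-angle identity $\tanh(u/2) = (\cosh u - 1)/\sinh u$, can be rearranged into the form $[\coth(\hat A_1(T-t)+\alpha_1) - \cosh\alpha_1/\sinh(\hat A_1(T-t)+\alpha_1)]/\hat A_1$ claimed in the corollary. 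The bookkeeping in this hyperbolic simplification is the one place where mistakes are easy; I would double-check by evaluating the resulting $H_1$ at $t=T$ to confirm $H_1(T)=0$ and by plugging the closed form back into the ODE.

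Finally, Lemma \ref{lma:HJ-1} gives $v_t^\ast = V_x/(2\eta) = (H_2 x_t + H_1)/(2\eta)$, so substituting the closed-form $H_2$ and $H_1$ and combining the two $\coth/\sinh$ terms yields the stated expression \eqref{eqn:vst_feedback}. For the expected trajectory, I would take expectations in $dX_t = v_t^\ast\,dt + \sigma^X dW_t^X$; because $v_t^\ast$ is affine in $x_t$, $x_t^\ast := \mathbb{E}[X_t^\ast]$ satisfies the linear ODE $\dot x_t^\ast = -\hat A_1\coth(\hat A_1(T-t)+\alpha_1)\,x_t^\ast + H_1(t)/(2\eta)$ with $x_0^\ast = x_0$. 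The same integrating factor $1/\sinh(\hat A_1(T-t)+\alpha_1)$ used for $H_1$ is applicable here; integrating from $0$ to $t$ produces precisely \eqref{eqn:optimal-x-1}, completing the proof.
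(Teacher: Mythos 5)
The paper states this corollary without proof, so the only question is whether your derivation holds up. Your treatment of the Riccati equation for $H_2$ (including the terminal condition fixing $\alpha_1$), the feedback law $v^*_t=(H_2x_t+H_1)/(2\eta)$, and the expected-trajectory step are all correct. However, there is a genuine error in your solution of the $H_1$ equation. Writing $u(t)=\hat A_1(T-t)+\alpha_1$, the equation is $\dot H_1+\frac{H_2}{2\eta}H_1=-B_1$ with $\frac{H_2}{2\eta}=-\hat A_1\coth u$. Since $u$ is \emph{decreasing} in $t$, one has $\frac{d}{dt}\log\sinh u=-\hat A_1\coth u$, so the integrating factor $e^{\int \frac{H_2}{2\eta}dt}$ is $\sinh u$ itself, not $1/\sinh u$. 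Multiplying by $\sinh u$ gives $\frac{d}{dt}\bigl(\sinh(u)H_1\bigr)=-B_1\sinh u$, hence, using $H_1(T)=0$,
\[
H_1(t)=\frac{B_1}{\sinh u(t)}\int_t^T\sinh\bigl(u(s)\bigr)\,ds=\frac{B_1}{\hat A_1}\cdot\frac{\cosh u(t)-\cosh\alpha_1}{\sinh u(t)},
\]
which is exactly the stated closed form — no integral of $1/\sinh$ and no half-angle identities are needed.

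Your intermediate formula $H_1(t)=B_1\sinh(u(t))\int_t^T\frac{ds}{\sinh(u(s))}$ is a different function: it evaluates to $\frac{B_1}{\hat A_1}\sinh(u(t))\log\frac{\tanh(u(t)/2)}{\tanh(\alpha_1/2)}$, and no rearrangement via $\tanh(u/2)=(\cosh u-1)/\sinh u$ can turn that logarithm into the rational expression $(\cosh u-\cosh\alpha_1)/(\hat A_1\sinh u)$. The sanity check you propose (evaluating at $t=T$) would not detect the error, since both expressions vanish there; substituting back into the ODE would. A useful cross-check you overlooked: the $H_1$ equation in standard form has coefficient $+\frac{H_2}{2\eta}$ on the unknown, while the trajectory equation $\dot x^*=\frac{H_2}{2\eta}x^*+\frac{H_1}{2\eta}$ in standard form has coefficient $-\frac{H_2}{2\eta}$; the two therefore require \emph{reciprocal} integrating factors, so they cannot share the factor $1/\sinh u$ as you assert. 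Your use of $1/\sinh u$ for the trajectory equation is the correct one of the pair, and that part of your argument does yield \eqref{eqn:optimal-x-1}.
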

We remark that, if the prior mean $m_t$ is identically zero, then $B_1 = 0$. It follows that the function $H_1$ is also identically zero. In this case, the expected optimal trajectory in \eqref{eqn:optimal-x-1} reduces to 
\begin{equation}
x^*_t = \frac{\sinh\left( \hat A_1\{T - t\} + \alpha_1 \right)}{\sinh\left( \hat A_1 T + \alpha_1 \right)} X  \label{eqn:optimal-x-m1-mean0}
\end{equation}
which resembles the Almgren-Chriss strategy except that the proxy of market trading rate precision $s_t$ is naturally embedded in the parameter $\hat A_1$. Moreover, the limiting behavior of $v_t^*$ as $g\rightarrow \infty$ and $A_1\rightarrow 0$ is obtained in the following corollary.
\begin{corollary}
Assume $m_t=0$. As $g\rightarrow \infty$ and $A_1\rightarrow 0$, the optimal execution strategy in feedback from \eqref{eqn:vst_feedback} converges to the adapted TWAP strategy defined as
\begin{equation} 
v_t^*=-\frac{x_t}{T-t}.  \label{eqn:adap-twap}
\end{equation}
A sufficient condition for $g\rightarrow\infty$ is $\delta\rightarrow\infty$, which means the final penalty on the inventory is extremely high. A sufficient condition for $A_1\rightarrow 0$ is $R_{xx} \rightarrow 0$ and $s_t\rightarrow \infty$, which corresponds respectively to zero quadratic term in $x$ for risk and the prior distribution being a Dirac measure concentrated at $m_t$.
\end{corollary}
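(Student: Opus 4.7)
The plan is to reduce the feedback formula to a single term, then perform careful asymptotic analysis on the combined limit $g\to\infty$, $\hat A_1\to 0$.

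First I would exploit the assumption $m_t = 0$. From the definition of $B_1$ in Theorem \ref{thm:ODE1}, namely $B_1 = \frac{(\gamma_M+R_{xa}) s_t m_t}{s_t+\beta R_{aa}}$, setting $m_t\equiv 0$ yields $B_1\equiv 0$. Consequently, the ODE $\dot H_1 + \frac{H_1 H_2}{2\eta} + B_1 = 0$ with terminal condition $H_1(T)=0$ is a linear homogeneous ODE, so $H_1\equiv 0$. Equivalently, the second summand in \eqref{eqn:vst_feedback} is identically zero (one can also read this directly: $B_1=0$ collapses the numerator), leaving
\[
v_t^* = -\hat A_1\,\coth\!\bigl(\hat A_1(T-t) + \alpha_1\bigr)\, x_t.
\]

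Next I would pin down the asymptotics of $\alpha_1 = \coth^{-1}\!\bigl(\tfrac{g}{\eta \hat A_1}\bigr)$ as $g\to\infty$ and $\hat A_1\to 0$. Since $g/(\eta\hat A_1)\to\infty$ in this regime, $\alpha_1\to 0^{+}$; using $\coth(\alpha_1) = 1/\alpha_1 + \alpha_1/3 + O(\alpha_1^3)$ and inverting, I obtain $\alpha_1 = \eta\hat A_1/g + O\!\bigl((\hat A_1/g)^3\bigr)$. In particular, the argument $u := \hat A_1(T-t)+\alpha_1$ satisfies $u\to 0$, so I may apply the same expansion $\coth(u) = 1/u + u/3 + O(u^3)$ to the feedback term:
\[
\hat A_1\,\coth\!\bigl(\hat A_1(T-t)+\alpha_1\bigr) = \frac{\hat A_1}{\hat A_1(T-t) + \alpha_1} + O(\hat A_1 u).
\]
Substituting the asymptotic $\alpha_1 \sim \eta\hat A_1/g$ into the leading term gives
\[
\frac{\hat A_1}{\hat A_1(T-t)+\eta\hat A_1/g + o(\hat A_1)} = \frac{1}{(T-t) + \eta/g + o(1)} \longrightarrow \frac{1}{T-t}
\]
as $g\to\infty$, while the $O(\hat A_1 u)$ remainder vanishes because both $\hat A_1$ and $u$ tend to $0$. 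Hence $v_t^* \to -x_t/(T-t)$, which is \eqref{eqn:adap-twap}.

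Finally, I would verify the two sufficient conditions. Since $g=\delta-\gamma/2$, clearly $\delta\to\infty$ forces $g\to\infty$. For $A_1\to 0$, recall
\[
A_1 = R_{xx} - \frac{\beta(\gamma_M + R_{xa})^2}{s_t + \beta R_{aa}};
\]
if $R_{xx}\to 0$ and $s_t\to\infty$, the first term vanishes while the denominator of the second blows up (with numerator bounded), so $A_1\to 0^{-}$, which keeps $\hat A_1 = \sqrt{-A_1/(2\eta)}$ well defined and drives it to $0$. I would close with the interpretive remark already stated in the corollary: $s_t\to\infty$ means the prior $\pi_t^0$ concentrates as a Dirac mass at $m_t=0$, i.e.\ the agent becomes certain the market trades at rate zero. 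The only genuinely delicate step is controlling the joint limit $(g,\hat A_1)\to(\infty,0)$, and this is handled cleanly by the uniform expansion above since it shows the limit is independent of the path taken.
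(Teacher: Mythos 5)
Your proof is correct and follows the same overall strategy as the paper: use $m_t=0$ to get $B_1=0$ (hence $H_1\equiv 0$) and reduce the claim to the single limit $\hat A_1\coth\left(\hat A_1(T-t)+\alpha_1\right)\to \frac{1}{T-t}$. The only difference is the device used to evaluate that limit --- you expand $\alpha_1=\coth^{-1}\!\left(\frac{g}{\eta\hat A_1}\right)\sim \frac{\eta\hat A_1}{g}$ and invoke the Laurent series of $\coth$ at the origin, whereas the paper applies the $\coth$ addition formula and substitutes $\coth\alpha_1=\frac{g}{\eta\hat A_1}$ so as to reduce everything to the elementary limit $\hat A_1\coth\left(\hat A_1(T-t)\right)\to\frac{1}{T-t}$; both routes are valid, and yours additionally exhibits the $O(\eta/g)$ correction and makes the path-independence of the joint limit explicit.
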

\begin{proof}
If $m_t=0$, then $B_1=0$. It thus suffices to show that
\begin{equation*}
\lim_{g\rightarrow\infty,\hat{A}_1\rightarrow 0} \hat{A}_1 \coth\left(\hat{A}_1\{T-t\}+\alpha_1  \right) = \frac{1}{T-t}.
\end{equation*}
Indeed,
\begin{equation*}
\begin{aligned}
& \lim_{g\rightarrow\infty,\hat{A}_1\rightarrow 0} \hat{A}_1 \coth\left(\hat{A}_1\{T-t\}+\alpha_1  \right) \\
=& \lim_{g\rightarrow\infty,\hat{A}_1\rightarrow 0} \hat{A}_1\frac{1+\coth\left(\hat{A}_1\{T-t\} \right) \frac{g}{\eta\hat{A}_1}}{\coth\left(\hat{A}_1\{T-t\} \right) +\frac{g}{\eta\hat{A}_1}} \\
=& \lim_{g\rightarrow\infty,\hat{A}_1\rightarrow 0} \frac{\frac{\eta\hat{A}_1^2}{g} + \hat{A}_1 \coth\left(\hat{A}_1\{T-t\} \right) }{\frac{\eta}{g} \hat{A}_1 \coth\left(\hat{A}_1\{T-t\} \right) + 1}.
\end{aligned}
\end{equation*}
Since
\begin{equation*}
\lim_{\hat{A}_1\rightarrow 0} \hat{A}_1 \coth\left(\hat{A}_1\{T-t\} \right) =\frac{1}{T-t},
\end{equation*}
we reach the conclusion.
\end{proof}

\subsection{Model 2} \label{sec:model-2}
In this subsection, as for Model 1 in Section \ref{sec:model-1}, we show that under Assumptions {\bf A1} and {\bf A3}, which we shall refer to as {\it Model 2} hereafter, the rHJI equation \eqref{eqn:rHJI} is reduced to the effective Hamilton-Jacobi equation \eqref{eqn:HJ-2} that admits a solution in semi-analytical form subject to solving a system of ODEs. We also obtain closed form expressions for the solution in the case of time independent coefficients, yielding closed form expressions for the optimal trading rate and the optimal posterior distribution for market trading rate.

The following lemma shows how to reduce the rHJI equation \eqref{eqn:rHJI} to the effective Hamilton-Jacobi equation \eqref{eqn:HJ-2}. 
\begin{lemma}
In Model 2, the rHJI equation \eqref{eqn:rHJI} is reduced to the following effective Hamilton-Jacobi equation.
\begin{eqnarray}
&& V_t + \frac{(\sigma^X)^2}2 V_{xx} + \frac14\frac{\left(V_x + \frac{R_{va} s_t m_t}{s_t + \beta R_{aa}} - \frac{\beta \gamma_M R_{va}}{s_t + \beta R_{aa}} x \right)^2}{\eta - \frac{R_{vv}}2 + \frac{\beta R_{va}^2}{2(s_t + \beta R_{aa})}} \label{eqn:HJ-2} \\
&& \quad \qquad - \frac{\beta \gamma_M^2}{2(s_t + \beta R_{aa})} x^2 + \frac{s_t}{2\beta} m_t^2 \left(1 - \frac{s_t}{s_t + \beta R_{aa}}\right) - \frac1{2\beta} \log \left(\frac{s_t}{s_t + \beta R_{aa}}\right) \nonumber \\
&& \quad \qquad + \frac{\gamma_M s_t m_t}{s_t + \beta R_{aa}} x + \frac\gamma2(\sigma^X)^2 + \rho\sigma^S \sigma^X = 0. \nonumber
\end{eqnarray}
with terminal condition $V(T, x) = -g x^2$, where $g = \delta - \frac\gamma2$.
\end{lemma}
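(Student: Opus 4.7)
The plan is to mirror the two-step strategy used for Model 1 (Lemmas \ref{lma:inner-min-value} and \ref{lma:HJ-1}) but with the additional complication that, under Assumption \textbf{A3}, both the running risk $R$ and the noise terms contribute coupling between the control $v$ and the market trading rate $a$. First I would do the inner minimization over $\pi$, holding $v$ and $V$ fixed, by invoking Lemma \ref{lma:crucial}; then I would perform the outer maximization over $v$ using the first order condition.

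For the inner minimization, I would collect the $a$-dependent part of $\tilde R(t,x,v,a)$. Under \textbf{A3},
\[
\tilde R(t,x,v,a) = \gamma_M x a + \tfrac{\gamma}{2}(\sigma^X)^2 + \rho\sigma^S\sigma^X - \eta v^2 + \tfrac{R_{vv}}{2} v^2 + R_{va}va + \tfrac{R_{aa}}{2}a^2,
\]
so the integrand in \eqref{eqn:rHJI}, modulo $a$-independent terms, is exactly of the form treated in Lemma \ref{lma:crucial} with
\[
C_2 = R_{aa}, \qquad C_1 = \gamma_M x + R_{va} v.
\]
Since $s_t + \beta R_{aa} > 0$ by \textbf{A3}, Lemma \ref{lma:crucial} identifies the unique minimizer $\pi^*$ as Gaussian with precision $s_t + \beta R_{aa}$ and mean $w^* = \frac{s_t m_t - \beta(\gamma_M x + R_{va} v)}{s_t + \beta R_{aa}}$, and yields the minimum value
\[
\tfrac{1}{2\beta}\log\tfrac{s_t+\beta R_{aa}}{s_t} + \tfrac{s_t m_t^2}{2\beta} - \tfrac{1}{2\beta}\tfrac{(s_t m_t - \beta(\gamma_M x + R_{va} v))^2}{s_t + \beta R_{aa}}.
\]
Adding back the $a$-independent pieces $V_t + \cL V + \tfrac{\gamma}{2}(\sigma^X)^2 + \rho\sigma^S\sigma^X - \eta v^2 + \tfrac{R_{vv}}{2} v^2$ reduces \eqref{eqn:rHJI} to a pointwise maximization in $v$ of a quadratic expression.

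The second step is to expand the squared term $(s_t m_t - \beta(\gamma_M x + R_{va} v))^2$ and regroup by powers of $v$. The coefficient of $v^2$ comes from $-\eta v^2 + \tfrac{R_{vv}}{2}v^2$ together with the $\beta^2 R_{va}^2 v^2$ inside the square, giving
\[
-\Bigl[\eta - \tfrac{R_{vv}}{2} + \tfrac{\beta R_{va}^2}{2(s_t + \beta R_{aa})}\Bigr] v^2,
\]
which is strictly negative by \textbf{A3}, ensuring the inner maximum is attained. The coefficient of $v$ is $V_x + \frac{R_{va}(s_t m_t - \beta \gamma_M x)}{s_t + \beta R_{aa}}$. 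The first order condition then produces the optimal feedback
\[
v^* = \tfrac{1}{2}\,\frac{V_x + \frac{R_{va}s_t m_t}{s_t+\beta R_{aa}} - \frac{\beta\gamma_M R_{va}}{s_t+\beta R_{aa}}x}{\eta - \frac{R_{vv}}{2} + \frac{\beta R_{va}^2}{2(s_t+\beta R_{aa})}},
\]
and its maximum value is the standard $A^2/(4B)$, which is precisely the quadratic fraction appearing in \eqref{eqn:HJ-2}.

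All remaining ($v$-free) pieces then need to be matched. The $x^2$ coefficient $-\tfrac{\beta\gamma_M^2}{2(s_t+\beta R_{aa})}$ and the linear-in-$x$ term $\tfrac{\gamma_M s_t m_t}{s_t+\beta R_{aa}}x$ both arise from expanding $(s_t m_t - \beta\gamma_M x)^2/(s_t+\beta R_{aa})$ divided by $2\beta$; the constant $\tfrac{s_t m_t^2}{2\beta}\bigl(1 - \tfrac{s_t}{s_t+\beta R_{aa}}\bigr)$ is obtained by combining $\tfrac{s_t m_t^2}{2\beta}$ with the $(s_t m_t)^2$ part of that expansion; and the logarithmic term is left untouched. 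The terminal condition $V(T,x) = G(x,s) = g(x) + \tfrac{\gamma}{2}x^2 = -\delta x^2 + \tfrac{\gamma}{2}x^2 = -gx^2$ with $g = \delta - \tfrac{\gamma}{2}$ comes directly from \textbf{A3} and the definition of $G$. The main bookkeeping obstacle is simply keeping the cross terms in the expansion of the squared quantity organized, but no new idea beyond Lemma \ref{lma:crucial} and standard quadratic maximization is required.
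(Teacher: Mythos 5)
Your proposal is correct and follows essentially the same route as the paper: an inner Gaussian minimization producing a posterior with precision $s_t+\beta R_{aa}$ and mean $w^*=\frac{s_t m_t-\beta(\gamma_M x+R_{va}v)}{s_t+\beta R_{aa}}$, followed by a quadratic maximization in $v$ whose optimal value $A^2/(4B)$ gives the fraction in \eqref{eqn:HJ-2}. The only difference is cosmetic: you obtain the inner minimum by invoking Lemma \ref{lma:crucial} directly (which also certifies the minimizer via convexity), whereas the paper applies the first-order variational criterion and then evaluates the log-partition integral $-\frac1\beta\log\int\pi^0_t(a)\,e^{-\beta(\cdot)}\,da$ explicitly; the two computations yield the identical expression.
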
 
\begin{proof}
Since the proof of the lemma is almost parallel to that of Lemma \ref{lma:inner-min-value} and Lemma \ref{lma:HJ-1} for Model 1, we show only the essential calculations for deriving the effective Hamilton-Jacobi equation \eqref{eqn:HJ-2}. The first order criterion via variational calculus applied to the inner minimization in \eqref{eqn:rHJI} yields that the minimizer $\pi_t^*$ for the inner minimization in the rHJI equation \eqref{eqn:rHJI} is given by 
\[
\pi_t^*(a) = \sqrt{\frac{s_t + \beta R_{aa}}{2\pi}}e^{-\frac{s_t + \beta R_{aa}}{2} (a-w^*)^2}, \quad w^* = \frac{-\beta R_{va} v - \beta \gamma_M x + s_t m_t}{s_t + \beta R_{aa}}, 
\]
since $s_t + \beta R_{aa} > 0$, and its corresponding value by 
\begin{equation}
-\frac1\beta \log \int \pi^0_t(a|x) e^{-\beta \left[V_t + \cL V + \tilde R(t, x, s, v, a) \right]} da. \label{eqn:rrHJI-2}
\end{equation}
We now calculate the integral in \eqref{eqn:rrHJI-2} as follows. We have
\begin{eqnarray*}
&& \int \pi^0_t(a|x) e^{-\beta \left\{V_t + \cL V + \tilde R(t, x, s, v, a) \right\}} da \\
&=& \int e^{-\beta\left\{V_t + \cL V + \frac\gamma2(\sigma^X)^2 + \rho\sigma^S \sigma^X - \eta v^2 + \gamma_M xa + \frac12 R_{vv} v^2 + R_{va} va + \frac12 R_{aa} a^2 \right\}} \sqrt{\frac{s_t}{2\pi}}e^{-\frac{s_t}2 (a-m_t)^2} da \\
&=& e^{-\beta\left\{V_t + \frac{(\sigma^X)^2}2 V_{xx} + v V_x + \frac\gamma2(\sigma^X)^2 + \rho\sigma^S \sigma^X - \eta v^2 + \frac12 R_{vv} v^2\right\}} \\
&& \qquad \int e^{-\beta\left\{(\gamma_M x + R_{va}v)a + \frac12 R_{aa} a^2 \right\}} \sqrt{\frac{s_t}{2\pi}}e^{-\frac{s_t}2 (a-m_t)^2} da \\
&=& \!\!\! e^{-\beta\left\{V_t + \frac{(\sigma^X)^2}2 V_{xx} + v V_x + \frac\gamma2(\sigma^X)^2 + \rho\sigma^S \sigma^X - \eta v^2 + \frac12 R_{vv}v^2 \right\}} \sqrt{\frac{s_t}{s_t + \beta R_{aa}}} e^{\frac{\left(s_t m_t - \beta\{\gamma_M x + R_{va} v\}\right)^2}{2(s_t + \beta R_{aa})} - \frac{s_t}2 m_t^2}.
\end{eqnarray*}
Next, by taking logarithm of the last equation and multiplying the resulting expression by the factor $-\frac1\beta$, then substitute into \eqref{eqn:rrHJI-2}, the rHJI equation reduces to the following HJB equation
\begin{eqnarray}
&& \max_v \left\{V_t + \frac{(\sigma^X)^2}2 V_{xx} + v V_x + \frac\gamma2(\sigma^X)^2 + \rho\sigma^S \sigma^X - \left(\eta - \frac{R_{vv}}2\right) v^2  \right. \label{eqn:hjb-2} \\
&& \left. \quad \qquad -\frac1{2\beta} \log \left(\frac{s_t}{s_t + \beta R_{aa}}\right) - \frac{\left(s_t m_t - \beta\{\gamma_M x + R_{va} v\}\right)^2}{2\beta(s_t + \beta R_{aa})} 
+ \frac{s_t}{2\beta} m_t^2 \right\} = 0. \nonumber
\end{eqnarray}
%
%

The maximization problem in \eqref{eqn:hjb-2}, after omitting the terms that are independent of $v$, reads
\begin{eqnarray*}
\max_{v\in\mathbb{R}}\left\{ -\left(\eta - \frac{R_{vv}}{2}\right) v^2 + v V_x - \frac{\left[ \beta R_{va} v + \beta \gamma_M x - s_t m_t \right]^2}{2\beta (s_t + \beta R_{aa})} \right\} 
\end{eqnarray*}
Since the objective function in the maximization problem above is quadratic in $v$, its unique maximum is attended at
\begin{equation}
v = \frac12 \frac{V_x + \frac{R_{va} s_t m_t}{s_t + \beta R_{aa}} - \frac{\beta \gamma_M R_{va}}{s_t + \beta R_{aa}} x }{\eta - \frac{R_{vv}}{2} + \frac{\beta R_{va}^2}{2(s_t + \beta R_{aa})}} \label{eqn:optimizer-2}
\end{equation} 
since $\eta - \frac{R_{vv}}{2} + \frac{\beta R_{va}^2}{2(s_t + \beta R_{aa})} > 0$ by assumption.

Finally, by substituting the maximizer \eqref{eqn:optimizer-2} into the HJB equation \eqref{eqn:hjb-2}, we obtain the effective Hamilton-Jacobi equation given in \eqref{eqn:HJ-2}, which completes the proof. 
\end{proof}

Following almost the same procedure as for Model 1 in Section \ref{sec:model-1}, one can show that the value function in this case is also quadratic in $x$ and the time dependent coefficients satisfy similar Riccati equations as in Theorem \ref{thm:ODE1}. We summarize the result in the following theorem but omit its proof.
\begin{theorem} \label{thm:ODE2}
In Model 2, the value function $V$ for the control problem \eqref{eqn:rr-control1} is quadratic in $x$, say, $V(t, x) = \frac12 H_2 x^2 + H_1 x + H_0$, where the time dependent functions $H_2$ and $H_1$ satisfy the following system of ODEs
\begin{eqnarray*}
&& \dot H_2 + \frac1{2\tilde\eta} \left(H_2 - \frac{\beta \gamma_M R_{va}}{s_t + \beta R_{aa}} \right)^2 - A_2 = 0, \\
&& \dot H_1 + \frac1{2\tilde\eta} \left(H_2 - \frac{\beta \gamma_M R_{va}}{s_t + \beta R_{aa}} \right)\left(H_1 + \frac{R_{va} s_t m_t}{s_t + \beta R_{aa}}\right) + B_2 = 0.
\end{eqnarray*}
with terminal condition $H_2(T) = -2g$ and $H_1(T) = 0$. $A_2$, $B_2$, and $\tilde\eta$ are given by
\begin{eqnarray*}
&& A_2 = \frac{\beta \gamma_M^2}{s_t+\beta R_{aa}}, \qquad
B_2 = \frac{\gamma_M s_t m_t}{s_t + \beta R_{aa}}, \quad
\tilde \eta = \eta - \frac{R_{vv}}2 + \frac{\beta R_{va}^2}{2(s_t + \beta R_{aa})},
\end{eqnarray*}
where $s_t + \beta R_{aa} > 0$ and $\tilde\eta > 0$ by assumption.
\end{theorem}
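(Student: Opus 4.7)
The plan is to mirror the proof of Theorem~\ref{thm:ODE1} for Model~1, since the effective Hamilton-Jacobi equation \eqref{eqn:HJ-2} is again quadratic in $x$ with coefficients depending only on $t$. The natural ansatz is
\[
V(t,x) = \tfrac{1}{2} H_2(t)\, x^2 + H_1(t)\, x + H_0(t),
\]
so that $V_t = \tfrac12 \dot H_2 x^2 + \dot H_1 x + \dot H_0$, $V_x = H_2 x + H_1$, and $V_{xx} = H_2$. The terminal condition $V(T,x) = -g x^2$ immediately yields $H_2(T) = -2g$ and $H_1(T) = H_0(T) = 0$.

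The key algebraic manipulation is to rewrite the nonlinear term of \eqref{eqn:HJ-2}. With the ansatz, the numerator of the fraction is linear in $x$:
\[
V_x + \frac{R_{va} s_t m_t}{s_t + \beta R_{aa}} - \frac{\beta \gamma_M R_{va}}{s_t + \beta R_{aa}} x
= \left(H_2 - \frac{\beta \gamma_M R_{va}}{s_t + \beta R_{aa}}\right) x + \left(H_1 + \frac{R_{va} s_t m_t}{s_t + \beta R_{aa}}\right).
\]
Squaring this expression and dividing by $4\tilde\eta$ produces, at once, an $x^2$-contribution, an $x$-contribution via the cross term, and an $x$-free contribution. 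Substituting into \eqref{eqn:HJ-2} and collecting terms in powers of $x$ is then mechanical.

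The $x^2$-coefficient of the resulting identity gives
\[
\tfrac{1}{2}\dot H_2 + \tfrac{1}{4\tilde\eta}\Bigl(H_2 - \tfrac{\beta \gamma_M R_{va}}{s_t + \beta R_{aa}}\Bigr)^2 - \tfrac{\beta \gamma_M^2}{2(s_t+\beta R_{aa})} = 0,
\]
which, after multiplying by $2$ and recognising $A_2$, is exactly the first ODE in the theorem. The $x^1$-coefficient comes from the cross term $2AB/(4\tilde\eta) = AB/(2\tilde\eta)$ (with $A$ and $B$ the bracketed quantities above) together with the linear term $\tfrac{\gamma_M s_t m_t}{s_t+\beta R_{aa}} x = B_2 x$; this reproduces the stated equation for $H_1$. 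The $x^0$-coefficient similarly yields the (unstated) ODE for $H_0$, which however is not needed for the optimal strategy since $v^*$ only depends on $V_x = H_2 x + H_1$ via \eqref{eqn:optimizer-2}.

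I expect no genuine obstacle: this is a coefficient-matching argument, with the only delicate point being bookkeeping when expanding the squared numerator and verifying that the three purely $x$-independent constants coming from the $B^2/(4\tilde\eta)$ piece, the logarithmic term $-\tfrac{1}{2\beta}\log\tfrac{s_t}{s_t+\beta R_{aa}}$, the constant $\tfrac{s_t m_t^2}{2\beta}\bigl(1-\tfrac{s_t}{s_t+\beta R_{aa}}\bigr)$, and the diffusive/correlation terms $\tfrac\gamma2(\sigma^X)^2 + \rho\sigma^S\sigma^X + \tfrac{(\sigma^X)^2}{2}H_2$ consolidate into the ODE for $H_0$. Positivity of $\tilde\eta$ guaranteed by assumption {\bf A3} ensures the maximisation step that led to \eqref{eqn:HJ-2} is valid along the solution, so the verification that the resulting $V$ indeed solves the rHJI equation is automatic.
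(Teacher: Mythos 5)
Your proposal is correct and follows exactly the route the paper intends: the paper omits the proof of Theorem \ref{thm:ODE2} precisely because it is the same ansatz-substitution and coefficient-matching argument used for Theorem \ref{thm:ODE1}, applied to the effective Hamilton--Jacobi equation \eqref{eqn:HJ-2}. Your expansion of the squared numerator into $\bigl(H_2 - \tfrac{\beta\gamma_M R_{va}}{s_t+\beta R_{aa}}\bigr)x + \bigl(H_1 + \tfrac{R_{va}s_t m_t}{s_t+\beta R_{aa}}\bigr)$ and the resulting identification of $A_2$, $B_2$, and the terminal conditions all check out.
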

Finally, as in Model 1, if we further assume the coefficients are constant, we obtain closed form expressions for the functions $H_2$ and $H_1$, thus closed form expressions for optimal trading rate and expected optimal trading strategy as well. We summarize the result in the following corollary without proof.
\begin{corollary}
Assume $\tilde\eta > 0$ and that all the coefficients are constant. Let 
\begin{eqnarray*}
&& \hat A_2 = \sqrt{\frac{A_2}{2\tilde\eta}}.
\end{eqnarray*}
The functions $H_2$ and $H_1$ admit the following closed form expressions
\begin{eqnarray*}
&& H_2(t) = -2\tilde\eta \hat A_2 \coth\left( \hat A_2\{T - t\} + \alpha_2 \right) + C, \\
&& H_1(t) = - \frac{D \hat A_2\sinh\alpha_2 + B_2\cosh\alpha_2}{\hat A_2\sinh\left( \hat A_2\{T - t\} + \alpha_2 \right)} + \frac{B_2}{\hat A_2} \coth\left( \hat A_2\{T - t\} + \alpha_2 \right) + D,
\end{eqnarray*}
where $\alpha_2 = \coth^{-1}\left(\frac{2g + C}{2\tilde\eta\hat A_2}\right)$, 
$C = \frac{\beta \gamma_M R_{va}}{s_t + \beta R_{aa}}$, and
$D = - \frac{R_{va} s_t m_t}{s_t + \beta R_{aa}}$. 
It follows that the optimal trading strategy $v_t^*$ is obtained by
\begin{eqnarray*}
v_t^* &=& \frac1{2\tilde\eta}(H_2 x_t + H_1-C x_t -D) \\
&=&  -\hat A_2 \coth\left( \hat A_2\{T - t\} + \alpha_2 \right)  x_t \\
&& \quad  - \frac{D \hat A_2 \sinh\alpha_2 + B_2\cosh\alpha_2}{2\tilde\eta\hat A_2\sinh\left( \hat A_2\{T - t\} + \alpha_2 \right)} + \frac{B_2}{2\tilde\eta\hat A_2} \coth\left( \hat A_2\{T - t\} + \alpha_2 \right). 
\end{eqnarray*} 
\end{corollary}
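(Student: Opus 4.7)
The plan is to reduce the coupled system from Theorem \ref{thm:ODE2} to a standard Riccati equation plus a linear first-order ODE via a constant shift, solve each in closed form, and then read off $v_t^*$ from the feedback formula \eqref{eqn:optimizer-2} derived in the preceding lemma. Let $C = \frac{\beta \gamma_M R_{va}}{s_t+\beta R_{aa}}$ and $D = -\frac{R_{va} s_t m_t}{s_t + \beta R_{aa}}$; both are constants under the current constancy assumption. Introduce the shifts $\tilde H_2 := H_2 - C$ and $\tilde H_1 := H_1 - D$. Since $\dot C = \dot D = 0$, the system of Theorem \ref{thm:ODE2} decouples into
\begin{eqnarray*}
&& \dot{\tilde H_2} + \frac{\tilde H_2^2}{2\tilde\eta} - A_2 = 0, \\
&& \dot{\tilde H_1} + \frac{\tilde H_2}{2\tilde\eta}\tilde H_1 + B_2 = 0,
\end{eqnarray*}
with terminal conditions $\tilde H_2(T) = -2g - C$ and $\tilde H_1(T) = -D$.

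Next, I would solve the autonomous Riccati for $\tilde H_2$. Writing $A_2 = 2\tilde\eta \hat A_2^2$ (with $A_2>0$ granted by $\gamma_M^2>0$ and $s_t+\beta R_{aa}>0$), direct differentiation using the identity $(\coth u)' = 1 - \coth^2 u$ verifies that $\tilde H_2(t) = -2\tilde\eta \hat A_2 \coth(\hat A_2\{T-t\} + \alpha_2)$ solves the Riccati, and imposing $\tilde H_2(T) = -2g-C$ pins down $\alpha_2 = \coth^{-1}\!\left(\frac{2g+C}{2\tilde\eta \hat A_2}\right)$. Adding back $C$ recovers the stated closed form for $H_2$.

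Then I would attack the linear ODE for $\tilde H_1$ via the integrating factor method. The key observation is
\[
\frac{d}{dt}\log\sinh(\hat A_2\{T-t\} + \alpha_2) = -\hat A_2 \coth(\hat A_2\{T-t\} + \alpha_2) = \frac{\tilde H_2}{2\tilde\eta},
\]
so $\mu(t) := \sinh(\hat A_2\{T-t\}+\alpha_2)$ serves as an integrating factor. Multiplying by $\mu$, integrating from $t$ to $T$, and using
\[
\int_t^T \sinh(\hat A_2\{T-s\}+\alpha_2)\,ds = \frac{1}{\hat A_2}\bigl[\cosh(\hat A_2\{T-t\}+\alpha_2) - \cosh\alpha_2\bigr]
\]
together with $\tilde H_1(T) = -D$ and $\mu(T) = \sinh\alpha_2$, I obtain the displayed closed form for $\tilde H_1$, and $H_1 = \tilde H_1 + D$ gives the claim.

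Finally, the optimal control formula is immediate from \eqref{eqn:optimizer-2}: $v_t^* = \frac{V_x - Cx_t - D}{2\tilde\eta}$, and substituting $V_x = H_2 x_t + H_1$ yields $v_t^* = \frac{\tilde H_2 x_t + \tilde H_1}{2\tilde\eta}$; plugging in the closed-form expressions for $\tilde H_2$ and $\tilde H_1$ produces the stated formula. The main obstacle is purely bookkeeping rather than conceptual: carefully tracking the constants through the integrating-factor step so the combination $D \hat A_2 \sinh\alpha_2 + B_2\cosh\alpha_2$ emerges cleanly over $\hat A_2 \sinh(\hat A_2\{T-t\}+\alpha_2)$, and correctly interpreting $\alpha_2$ after the shift by $C$ in the terminal condition. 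Once the shift by $(C,D)$ is recognized, the argument parallels the constant-coefficient analysis already carried out for Model 1.
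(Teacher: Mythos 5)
Your proposal is correct and follows what is evidently the intended argument: the paper states this corollary without proof, deferring to the parallel Model 1 computation, and your shift $\tilde H_2 = H_2 - C$, $\tilde H_1 = H_1 - D$ reduces the system of Theorem \ref{thm:ODE2} to exactly that constant-coefficient Riccati/linear pair, with the terminal conditions $\tilde H_2(T) = -2g - C$ and $\tilde H_1(T) = -D$ correctly yielding $\alpha_2$ and the stated $H_1$, and the feedback formula \eqref{eqn:optimizer-2} giving $v_t^* = \frac{1}{2\tilde\eta}(\tilde H_2 x_t + \tilde H_1)$ as claimed. All the individual computations (the $\coth$ ansatz for the Riccati, the $\sinh$ integrating factor, and the evaluation of $\int_t^T \sinh(\hat A_2\{T-s\}+\alpha_2)\,ds$) check out.
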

We remark that, if the prior mean $m_t$ is zero, then $B_2 = D = 0$, the optimal trading rate $v_t^*$ reduces to 
\begin{eqnarray}
v_t^* &=&  -\hat A_2 \coth\left( \hat A_2\{T - t\} + \alpha_2 \right) x_t. \label{eqn:optimal-v-m2-mean0}
\end{eqnarray}
In this case, the expected optimal trading trajectory $x_t^*$ reads
\begin{equation}
x_t^* =  \frac{\sinh\left( \hat A_2\{T - t\} + \alpha_2 \right)}{\sinh\left( \hat A_2 T + \alpha_2 \right)} X, \label{eqn:optimal-x-m2-mean0}
\end{equation} 
which again resembles the Almgren-Chriss strategy but the proxy of market trading rate precision $s_t$ is naturally embedded in the parameter $\hat A_2$. Finally, as in {\bf Model 1}, we state the limiting behavior of $v_t^*$ as as $g\rightarrow \infty$ and $A_2\rightarrow 0$ but omit its proof.
\begin{corollary}
Assume $m_t=0$, the optimal execution strategy converges to the adapted TWAP strategy \eqref{eqn:adap-twap} as $g\rightarrow \infty$ and $A_2\rightarrow 0$.
A sufficient condition for $g\rightarrow\infty$ is $\delta\rightarrow\infty$, which means a final block trade at terminal time is strictly prohibited. A sufficient condition for $A_2\rightarrow 0$ is $s_t\rightarrow \infty$, which corresponds to the prior distribution being a Dirac measure concentrated at $m_t$.
\end{corollary}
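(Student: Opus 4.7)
The plan is to follow the same template as the corresponding corollary in Model 1. First, I would observe that substituting $m_t = 0$ into the definitions given in the preceding corollary forces
$B_2 = \gamma_M s_t m_t/(s_t+\beta R_{aa}) = 0$ and $D = -R_{va} s_t m_t/(s_t+\beta R_{aa}) = 0$.
Hence the general closed-form expression for $v_t^\ast$ collapses precisely to \eqref{eqn:optimal-v-m2-mean0}, and the entire statement reduces to proving the single hyperbolic-function limit
\begin{equation*}
\lim_{g\to\infty,\ \hat A_2 \to 0} \hat A_2\,\coth\!\bigl(\hat A_2\{T-t\} + \alpha_2\bigr) \;=\; \frac{1}{T-t}.
\end{equation*}

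To pry apart the dependence on $t$ from the dependence on $g$ and $\hat A_2$, I would apply the addition formula $\coth(x+y) = \bigl(\coth x\coth y + 1\bigr)/\bigl(\coth x + \coth y\bigr)$ with $x = \hat A_2\{T-t\}$ and $y = \alpha_2$, and substitute $\coth\alpha_2 = (2g+C)/(2\tilde\eta\hat A_2)$ that follows from the definition of $\alpha_2$. Clearing the $\hat A_2$ in the denominator yields the tidy expression
\begin{equation*}
\hat A_2\,\coth\!\bigl(\hat A_2\{T-t\} + \alpha_2\bigr)
\;=\; \frac{(2g+C)\,\hat A_2\coth(\hat A_2\{T-t\}) + 2\tilde\eta\,\hat A_2^{\,2}}{2\tilde\eta\,\hat A_2\coth(\hat A_2\{T-t\}) + (2g+C)}.
\end{equation*}
From here the limit is routine: as $\hat A_2\to 0$ the elementary fact $\hat A_2\coth(\hat A_2\{T-t\})\to 1/(T-t)$ and $\hat A_2^{\,2}\to 0$ reduce the ratio to $\bigl[(2g+C)/(T-t)\bigr]\big/\bigl[(2g+C) + 2\tilde\eta/(T-t)\bigr]$, and then dividing numerator and denominator by $2g+C$ and sending $g\to\infty$ gives $1/(T-t)$, matching the adapted TWAP in \eqref{eqn:adap-twap}.

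The only mild subtlety, and the closest thing to an obstacle, is to confirm that the auxiliary constants $C = \beta\gamma_M R_{va}/(s_t+\beta R_{aa})$ and $\tilde\eta = \eta - R_{vv}/2 + \beta R_{va}^2/\bigl(2(s_t+\beta R_{aa})\bigr)$ stay bounded along the chosen limiting path, so that the $g\to\infty$ limit is legitimate. This is automatic when $A_2\to 0$ is realized by $s_t\to\infty$, since in that regime $C\to 0$ and $\tilde\eta\to \eta - R_{vv}/2$, both finite under the standing assumption $\tilde\eta > 0$.

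Finally, to verify the sufficient conditions stated in the corollary, I would note that $g = \delta - \gamma/2$, so $\delta\to\infty$ immediately yields $g\to\infty$; and $A_2 = \beta\gamma_M^{\,2}/(s_t + \beta R_{aa})\to 0$ as $s_t\to\infty$ with $\beta$ and $R_{aa}$ held fixed. Moreover, the prior density $\pi_t^0(a) = \sqrt{s_t/(2\pi)}\,e^{-s_t(a-m_t)^2/2}$ converges weakly to the Dirac mass $\delta_{m_t}$ as $s_t\to\infty$, giving the probabilistic interpretation claimed. Altogether the argument is a direct translation of the Model~1 proof; I do not anticipate any nontrivial obstacle.
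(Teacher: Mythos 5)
Your proposal is correct and is essentially the proof the paper has in mind: the paper explicitly omits this proof as being "as in Model 1," and your argument is the direct adaptation of that Model 1 computation (reduce to the hyperbolic limit via $B_2=D=0$, apply the $\coth$ addition formula with $\coth\alpha_2=(2g+C)/(2\tilde\eta\hat A_2)$, and use $\hat A_2\coth(\hat A_2\{T-t\})\to 1/(T-t)$). Your extra remark on the boundedness of $C$ and $\tilde\eta$ along the limiting path is a sensible, if minor, addition.
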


\section{Verification theorem and duality} \label{sec:verification-duality}

\subsection{Verification theorem} \label{sec:verification}
We prove the verification theorems for the relative entropy-regularized stochastic differential game \eqref{eqn:rr-control1} in this subsection. 
The argument is standard, relies mainly on the saddle point property for a max-min problem. In fact, the argument applies not only to our models but to general problems, subject to certain regularity conditions. 
\begin{theorem} \label{thm:verification}
Let $V(t, x, s)$ be the unique smooth solution to the following rHJI equation
\begin{equation} \label{rHJI_maxmin}
\max_v \min_{\pi\ll\pi_t^0} \int \left\{V_t + \cL V + \tilde R(t, x, s, v, a) + 
\frac1\beta\log\frac{\pi(a)}{\pi^0_t(a)} \right\} \pi(a) da = 0
\end{equation}
with terminal condition $V(T, x, s) = G(x, s)$.
Then $V$ is equal to the value function $U$ of the following maximization-minimization problem
\begin{eqnarray}
U(t, x, s) &=& \max_{v\in\cV_t} \min_{\pi \in\cA_t} \E_t\left[ G(X_T, S_T) \right. \nonumber \\
&& \qquad \left. + \int_t^T\!\!\!\!\int\left\{\tilde R(\tau, X_\tau, S_\tau, v_\tau, a_\tau) + \frac1\beta\log\frac{\pi_\tau(a)}{\pi^0_\tau(a)} \right\} \pi_\tau(a)da\,d\tau\right], \nonumber
\end{eqnarray}
where recall that $\E_t[\cdot]$ denotes the conditional expectation $\Eof{\cdot|\cF_t}$ and $(X_t, S_t) = (x,s)$.
%
\end{theorem}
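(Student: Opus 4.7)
The plan is to apply Itô's formula to the smooth solution $V(\tau, X_\tau, S_\tau)$ of the rHJI equation \eqref{rHJI_maxmin} along an arbitrary admissible pair $(v, \pi) \in \cV_t \times \cA_t$, take conditional expectation, and exploit the saddle-point structure built into \eqref{rHJI_maxmin} to sandwich $U(t, x, s)$ between $V(t, x, s)$ from both sides.

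First I would fix $(v, \pi) \in \cV_t \times \cA_t$ and apply Itô's formula to $\tau \mapsto V(\tau, X_\tau, S_\tau)$ on $[t, T]$. Because the drift of $S_\tau$ in the true dynamics \eqref{eqn:S} uses $\inn{a}_\tau = \int a\,\pi_\tau(a)\,da$, while the pathwise generator $\cL$ in \eqref{rHJI_maxmin} treats $a$ as a parameter, the true infinitesimal generator of $(X_\tau, S_\tau)$ averages the pathwise generator against $\pi_\tau$, i.e.\ it equals $\int \cL V\,\pi_\tau(a)\,da$. Using the terminal condition $V(T, \cdot, \cdot) = G$, killing the Brownian martingale terms under $\Et{\cdot}$ (assuming sufficient integrability), and adding the running cost $\int_t^T\!\int\{\tilde R + \tfrac{1}{\beta}\log(\pi_\tau/\pi^0_\tau)\}\pi_\tau\,da\,d\tau$ to both sides produces the fundamental identity
\begin{equation*}
\Et{\text{objective}(v, \pi)} = V(t, x, s) + \Et{\int_t^T\!\!\!\int \Phi(\tau, a;\,v_\tau, \pi_\tau)\,\pi_\tau(a)\,da\,d\tau},
\end{equation*}
where $\Phi := V_\tau + \cL V + \tilde R + \tfrac{1}{\beta}\log(\pi/\pi^0)$ is precisely the integrand appearing in the rHJI equation \eqref{rHJI_maxmin}.

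Next I would invoke a feedback saddle point $(v^*, \pi^*)$ of the pointwise integrand in \eqref{rHJI_maxmin}, whose existence and admissibility are established for Models 1 and 2 by the explicit formulas derived in Sections \ref{sec:model-1} and \ref{sec:model-2}, and use it to squeeze $V$ and $U$ together. On the one hand, since $\pi^*$ is the inner minimizer at $v^*$ with zero minimum value, one has $\int \Phi(v^*, \pi)\,\pi\,da \geq 0$ for every admissible $\pi$; plugging $v = v^*$ into the identity above and minimising over $\pi$ yields $U(t, x, s) = \max_v \min_\pi \Et{\text{objective}} \geq V(t, x, s)$. On the other hand, by the saddle property $\int \Phi(v, \pi^*)\,\pi^*\,da \leq 0$ for every admissible $v$, which upon plugging $\pi = \pi^*$ and maximising over $v$ gives $\min_\pi \max_v \Et{\text{objective}} \leq V(t, x, s)$. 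The weak-duality inequality $\max_v\min_\pi \leq \min_\pi\max_v$ then closes the sandwich to $U = V$.

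The main obstacle I anticipate is technical rather than conceptual: rigorously justifying that the Itô stochastic integrals arising from the application of Itô's formula are true martingales, so that their conditional expectation actually vanishes, for \emph{every} admissible $(v, \pi)$, and confirming that the pointwise saddle of the rHJI integrand produces admissible feedback processes along the optimal trajectory. In the Gaussian linear-quadratic settings of Models 1 and 2 this is routine since $V$ is quadratic with explicit bounded coefficients on $[0, T]$, the optimal $v^*$ is affine in $x_\tau$, and the optimal $\pi^*$ is Gaussian with bounded mean and precision, so a standard localisation argument combined with the Burkholder--Davis--Gundy inequality suffices. For the general statement, the ``sufficient regularity'' implicit in the hypothesis ``unique smooth solution'' must be read as including such integrability bounds, or else replaced by a localisation procedure.
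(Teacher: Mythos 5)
Your It\^o-formula identity and the $U \geq V$ half of the sandwich are essentially the paper's argument (the paper works with an $\epsilon$-optimal $v$ rather than assuming the outer maximum in \eqref{rHJI_maxmin} is attained by an admissible feedback $v^*$, but that is a minor refinement of the same idea). The genuine issue is in your $U \leq V$ half. You route it through the dual value: you posit a saddle point $(v^*,\pi^*)$ of the Hamiltonian, deduce $\int \Phi(v,\pi^*)\,\pi^*\,da \leq 0$ for all $v$, conclude $\min_\pi\max_v \E_t[\cdot] \leq V$, and then invoke weak duality $\max_v\min_\pi \leq \min_\pi\max_v$. But the hypothesis of the theorem is only that $V$ solves the max-min equation \eqref{rHJI_maxmin}; the existence of a single $\pi^*$ against which \emph{every} $v$ yields a nonpositive integrand is the saddle-point (Isaacs) property, which is strictly stronger and does not follow from \eqref{rHJI_maxmin} alone. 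In this paper that property is established only later, in Theorem \ref{thm:duality}, for Models 1 and 2 and, for Model 2, under the additional assumption $\eta > \frac{R_{vv}}{2}$. So as written your argument proves the verification theorem only for those special models and implicitly imports a result that logically comes after it; it does not prove the statement in the generality in which it is posed.

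The repair is simpler than what you wrote and needs no saddle point: for each fixed admissible $v$, the max-min equation gives $\min_{\pi\ll\pi_t^0}\int\Phi(v,\pi)\,\pi\,da \leq 0$ pointwise, so there is a \emph{$v$-dependent} admissible $\pi^{(v)}$ making the running integrand nonpositive; your fundamental identity then gives $\E_t[\mathrm{obj}(v,\pi^{(v)})] \leq V(t,x,s)$, hence $\min_{\pi\in\cA_t}\E_t[\mathrm{obj}(v,\pi)] \leq V(t,x,s)$ for every $v$, and taking the supremum over $v$ yields $U \leq V$ directly. This is exactly the paper's route for that direction. Your remarks on the martingale property of the stochastic integrals, on the averaged generator, and on localisation are appropriate and consistent with the regularity the paper implicitly assumes.
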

\begin{proof}
For a given $t \in (0, T)$, consider the time interval $(t, T)$. Since $V$ satisfies the max-min equation \eqref{rHJI_maxmin}, we have that, for any given admissible control $v \in \cV_t$ in the interval $[t, T]$, there exists an admissible $\pi\in\cA_t$ such that
\[  
\int \left\{V_t + \cL V + \tilde R(\tau, x, s, v_\tau, a) + \frac1\beta\log\frac{\pi_\tau(a)}{\pi^0_\tau(a)} \right\} \pi_\tau(a) da \leq 0
\]
for every $\tau \in (t, T)$. Applying Ito's formula to $V$ then taking conditional expectation $\E_t[
\cdot]$ yields
\begin{eqnarray*}
&& \E_t\left[G(X_T, S_T) - V(t, X_t, S_t) \right] 
= \E_t\left[ V(T, X_T, S_T)\right] - V(t, x, s) \\
&=& \int_t^T \E_t\left[ V_{\tau} + \cL V \right] d\tau \\
&\leq& - \int_t^T  \E_t\left[\int \left\{\tilde R(\tau, X_{\tau}, S_{\tau}, v_{\tau}, a_{\tau}) + \frac1\beta\log\frac{\pi_{\tau}(a)}{\pi^0_{\tau}(a)} \right\} \pi_{\tau}(a) da \right] d\tau.
\end{eqnarray*}
It follows that
\begin{eqnarray*}
&& V(t,x,s) \\
&\geq& \E_t\left[ G(X_T, S_T) 
+ \int_t^T\!\!\!\!\int\left\{\tilde R(\tau, X_\tau, S_\tau, v_\tau, a_\tau) + \frac1\beta\log\frac{\pi_\tau(a)}{\pi^0_\tau(a)} \right\} \pi_\tau(a)da\,d\tau \right] \\
&\geq& \min_{\pi\in\cA_t} \E_t\left[ G(X_T, S_T) 
+ \int_t^T\!\!\!\!\int\left\{\tilde R(\tau, X_\tau, S_\tau, v_\tau, a_\tau) + \frac1\beta\log\frac{\pi_\tau(a)}{\pi^0_\tau(a)} \right\} \pi_\tau(a)da\, d\tau \right]
\end{eqnarray*}
Since $v$ is arbitrary, we end up 
\begin{eqnarray*}
&& V(t,x,s) \\
&\geq& \max_{v\in\cV_t} \min_{\pi\in\cA_t} \E_t\left[ G(X_T, S_T) 
+ \int_t^T\!\!\!\!\!\int\left\{\tilde R(\tau, X_\tau, S_\tau, v_\tau, a_\tau) + \frac1\beta\log\frac{\pi_\tau(a)}{\pi^0_\tau(a)} \right\} \pi_\tau(a)da\,d\tau\right] \\
&=& U(t, x, s).
\end{eqnarray*}

On the other hand, from the max-min equation \eqref{rHJI_maxmin}, we also have that, for any $\epsilon > 0$, there exists a $v \in \cV_t$ such that
\[  
\int \left\{V_t + \cL V + \tilde R(\tau, x, s, v_\tau, a) + \frac1\beta\log\frac{\pi_\tau(a)}{\pi^0_\tau(a)} \right\} \pi_\tau(a) da > -\epsilon.
\]
for $\tau \in (t, T)$ and all $\pi \in \cA_t$. Applying Ito's formula to $V$ and taking conditional expectation $\E_t[\cdot]$ yields
\begin{eqnarray*}
&& \E_t\left[ G(X_T, S_T) - V(t,X_t, S_t) \right] 
= \E_t\left[V(T, X_T, S_T)\right] - V(t, x, s) \\
&=& \int_t^T \E_t\left[V_{\tau} + \cL V\right] d\tau \\
&\geq& - \int_t^T  \E_t\left[\int \left\{\tilde R(\tau, X_{\tau}, S_{\tau}, v_{\tau}, a_{\tau}) + \frac1\beta\log\frac{\pi_{\tau}(a)}{\pi^0_{\tau}(a)} \right\} \pi_{\tau}(a) da \right] d\tau - \epsilon(T - t)
\end{eqnarray*}
It follows that
\begin{eqnarray*}
&& V(t,x,s) \\
&\leq& \E_t\left[ G(X_T, S_T) 
+ \int_t^T\!\!\!\!\!\int\left\{\tilde R(\tau, X_\tau, S_\tau, v_\tau, a_\tau) + \frac1\beta\log\frac{\pi_\tau(a)}{\pi^0_\tau(a)} \right\} \pi_\tau(a)da\,d\tau\right] + \epsilon(T - t)
\end{eqnarray*}
for all $\pi \in \cA_t$. We end up 
\begin{eqnarray*}
&& V(t, x, s) \\
&\leq& \min_{\pi\in\cA_t} \E_t\left[ G(X_T, S_T) \right. \\
&& \qquad \left. + \int_t^T\!\!\!\!\!\int\left\{\tilde R(\tau, X_\tau, S_\tau, v_\tau, a_\tau) + \frac1\beta\log\frac{\pi_\tau(a)}{\pi^0_\tau(a)} \right\} \pi_\tau(a)da\,d\tau\right] + \epsilon(T - t) \\
&\leq& \max_{v\in\cV_t} \min_{\pi\in\cA_t} \E_t\left[G(X_T, S_T) \right. \\
&& \qquad \left. + \int_t^T\!\!\!\!\!\int\left\{\tilde R(\tau, X_\tau, S_\tau, v_\tau, a_\tau) + \frac1\beta\log\frac{\pi_\tau(a)}{\pi^0_\tau(a)} \right\} \pi_\tau(a)da\,d\tau\right] + \epsilon(T - t) \\
&=& U(t, x, s) + \epsilon(T - t).
\end{eqnarray*}
We conclude that, since $\epsilon$ is arbitrary, $V(t, x, s) \leq U(t, x, s)$.
This completes the proof. 
\end{proof}

Finally, we remark that, by the same token, one can also prove the verification theorem for the ``dual" problem, which we summarize in the following theorem without proof. 
\begin{theorem} \label{thm:verification-dual}
Let $V(t, x, s)$ be the unique smooth solution to the following rHJI equation
\begin{equation} \label{rHJI_minmax}
\min_{\pi\ll\pi_t^0} \max_v \int \left\{V_t + \cL V + \tilde R(t, x, s, v, a) + 
\frac1\beta\log\frac{\pi(a)}{\pi^0_t(a)} \right\} \pi(a) da = 0
\end{equation}
with terminal condition $V(T, x, s) = G(x, s)$.
Then $V$ is equal to the value function $U$ of the following minimization-maximization problem
\begin{eqnarray}
U(t, x, s) &=& \min_{\pi \in\cA_t} \max_{v\in\cV_t} \E_t\left[ G(X_T, S_T) \right. \label{eqn:dual} \\
&& \qquad \left. + \int_t^T\!\!\!\!\int\left\{\tilde R(\tau, X_\tau, S_\tau, v_\tau, a_\tau) + \frac1\beta\log\frac{\pi_\tau(a)}{\pi^0_\tau(a)} \right\} \pi_\tau(a)da\,d\tau\right]. \nonumber
\end{eqnarray}
\end{theorem}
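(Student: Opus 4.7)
The plan is to mirror the proof of Theorem \ref{thm:verification} with the roles of the outer minimization over $\pi$ and inner maximization over $v$ swapped, using \eqref{rHJI_minmax} in place of \eqref{rHJI_maxmin} throughout. Two one-sided inequalities, obtained by applying Ito's formula to $V$ along the controlled dynamics of $(X_\tau, S_\tau)$ and taking conditional expectations, will combine to give $V = U$.

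For the upper bound $V(t,x,s) \leq U(t,x,s)$, the min-max equation \eqref{rHJI_minmax} implies that for every admissible $\pi \in \cA_t$ the inner supremum over $v$ is non-negative; hence there exists $v \in \cV_t$ with
\[
\int \left\{V_\tau + \cL V + \tilde R(\tau, X_\tau, S_\tau, v_\tau, a) + \tfrac1\beta\log\tfrac{\pi_\tau(a)}{\pi^0_\tau(a)}\right\}\pi_\tau(a)\,da \geq 0
\]
for a.e. $\tau \in (t, T)$. Applying Ito's formula to $V(\tau, X_\tau, S_\tau)$ on $[t, T]$, taking $\E_t[\cdot]$, and using the terminal condition $V(T,\cdot,\cdot) = G$ together with the non-negativity above yields
\[
V(t,x,s) \leq \E_t\left[G(X_T, S_T) + \int_t^T\!\!\int\left\{\tilde R + \tfrac1\beta\log\tfrac{\pi_\tau(a)}{\pi^0_\tau(a)}\right\}\pi_\tau(a)\,da\,d\tau\right].
\]
Bounding the right-hand side above by $\max_{v \in \cV_t} \E_t[\,\cdot\,]$ for the chosen $\pi$ and then taking the infimum over $\pi \in \cA_t$ gives $V \leq U$.

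For the reverse inequality, fix $\epsilon > 0$. Since the outer infimum in \eqref{rHJI_minmax} equals zero, I would select $\pi^\epsilon \in \cA_t$ for which the inner supremum over $v$ is strictly less than $\epsilon$, so that for all $v \in \cV_t$,
\[
\int \left\{V_\tau + \cL V + \tilde R + \tfrac1\beta\log\tfrac{\pi^\epsilon_\tau(a)}{\pi^0_\tau(a)}\right\}\pi^\epsilon_\tau(a)\,da < \epsilon
\]
on $(t, T)$. A parallel Ito-plus-conditional-expectation calculation, now with the opposite inequality, yields
\[
\E_t\left[G(X_T, S_T) + \int_t^T\!\!\int\left\{\tilde R + \tfrac1\beta\log\tfrac{\pi^\epsilon_\tau(a)}{\pi^0_\tau(a)}\right\}\pi^\epsilon_\tau(a)\,da\,d\tau\right] \leq V(t,x,s) + \epsilon(T-t)
\]
for every $v \in \cV_t$. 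Taking the supremum over $v$ on the left and using $\pi^\epsilon$ as a candidate for the outer infimum gives $U(t,x,s) \leq V(t,x,s) + \epsilon(T-t)$, and sending $\epsilon \downarrow 0$ closes the argument.

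The main obstacle, as in Theorem \ref{thm:verification}, is measurable selection: the $v$ extracted for a given $\pi$ in the first step and the $\epsilon$-optimal $\pi^\epsilon$ in the second must be realized as genuinely $\mathbb{F}$-adapted processes lying in $\cV_t$ and $\cA_t$, not merely as pointwise choices. In the linear-quadratic-Gaussian setting of Sections \ref{sec:model-1}--\ref{sec:model-2} this is painless since the pointwise optimizers are smooth explicit feedback functions of the state (compare \eqref{eqn:vst_feedback} and \eqref{eqn:optimal_pi}); moreover, the integrand is concave in $v$ and convex in $\pi$, so a Sion-type minimax theorem guarantees that the max-min and min-max values coincide and that the saddle point $(v^*, \pi^*)$ produced by Theorem \ref{thm:verification} also solves the dual problem \eqref{eqn:dual}.
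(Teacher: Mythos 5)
Your argument is exactly the mirror image of the proof of Theorem \ref{thm:verification} with the roles of $v$ and $\pi$ interchanged, which is precisely what the paper intends when it states this result ``by the same token'' without proof; both one-sided inequalities are carried out correctly. The added remarks on measurable selection and the Sion-type minimax property are sensible but not part of the paper's (omitted) argument, and note only that your first step, like the paper's own proof of Theorem \ref{thm:verification}, implicitly assumes the inner optimum is attained --- harmless here since the inner maximization in $v$ is a concave quadratic.
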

\subsection{Duality} \label{sec:duality}
The problem \eqref{eqn:rr-control1} is referred to as the primal problem and the problem \eqref{eqn:dual} as the dual problem. 
In this section, we prove that, for Model 1 and Model 2 considered respectively in Sections \ref{sec:model-1} and \ref{sec:model-2}, with an additional assumption for Model 2, the duality between the primal problem \eqref{eqn:rr-control1} and the dual problem \eqref{eqn:dual} holds. We hence conclude that, in the terminology of game theory, the stochastic differential game \eqref{eqn:rr-control1} is said to admit a value function as shown in \cite{fleming-souganidis} for the classical case. 

\begin{theorem} \label{thm:duality}
Under Mode1 1 and Model 2, for Model 2, further assume that $\eta > \frac{R_{vv}}{2}$, 
the primal problem \eqref{eqn:rr-control1} and its dual \eqref{eqn:dual} satisfy the strong duality, i.e., the two problems have the same value. 
\end{theorem}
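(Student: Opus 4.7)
Denote by $J(v,\pi)$ the objective functional appearing in \eqref{eqn:rr-control1}. Weak duality is immediate: for any admissible $v$ and $\pi$ we have $\min_\pi J(v,\pi) \le J(v,\pi) \le \max_v J(v,\pi)$, which yields $\max_v \min_\pi J \le \min_\pi \max_v J$. The plan is therefore to establish the reverse inequality by reducing the dynamic problem to a pointwise minimax interchange at the level of the Hamilton-Jacobi-Isaacs integrand, then transferring the conclusion through the two verification theorems of Section~\ref{sec:verification}.

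The key observation is that if, for every $(t,x,s)$ and every smooth candidate $V$ with its derivatives, the integrand
\begin{equation*}
F(v,\pi) := \int \left\{ V_t + \cL V + \tilde R(t,x,s,v,a) + \frac{1}{\beta}\log\frac{\pi(a)}{\pi^0_t(a)} \right\} \pi(a)\,da
\end{equation*}
satisfies $\max_v \min_\pi F = \min_\pi \max_v F$, then the two rHJI equations \eqref{rHJI_maxmin} and \eqref{rHJI_minmax} coincide and admit a common smooth solution. By Theorem~\ref{thm:verification} this common solution is the primal value function, and by Theorem~\ref{thm:verification-dual} it is also the dual value function, so strong duality follows.

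To obtain the pointwise minimax I would invoke a Sion-type theorem after verifying a concave-convex hypothesis on $F$. In Model~1 the variables $v$ and $\pi$ effectively decouple: $\tilde R$ has no $v$-$a$ coupling and depends on $v$ only through $-\eta v^2$, so $F(v,\pi)$ splits as a concave quadratic in $v$ plus a $\pi$-only functional that is convex by Lemma~\ref{lma:crucial}, making the interchange immediate. In Model~2 the coupling term $R_{va}va$ forces a genuine minimax analysis: for each fixed $\pi$, the $v$-dependence of $F$ is $-(\eta - R_{vv}/2)v^2 + v(V_x + R_{va}\langle a\rangle_\pi)$, which is strictly concave precisely under the added hypothesis $\eta > R_{vv}/2$; for each fixed $v$, the $\pi$-dependence is convex by Lemma~\ref{lma:crucial} applied with $C_1 = \gamma_M x + R_{va} v$ and $C_2 = R_{aa}$. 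Non-compactness of the strategy sets is handled by coercivity: the strict concavity in $v$ and the Kullback-Leibler penalty in $\pi$ both force optima into bounded subsets, allowing standard truncation before applying Sion's theorem on the compactified sets.

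The main obstacle is the Model~2 case, where the added hypothesis $\eta > R_{vv}/2$ is precisely what is needed to guarantee pointwise concavity in $v$ \emph{before} integrating out $a$. The weaker assumption $\eta - R_{vv}/2 + \beta R_{va}^2/[2(s_t + \beta R_{aa})] > 0$ used in Section~\ref{sec:model-2} only delivers concavity of $F$ \emph{after} the inner minimization over $\pi$ has been carried out, which is insufficient to justify interchanging $\max_v$ and $\min_\pi$. Once the stronger hypothesis is granted and the truncation step has been dispatched, the remaining work consists only of invoking the two verification theorems already proved, so no further machinery is required.
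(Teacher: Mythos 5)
Your proposal is correct and follows essentially the same route as the paper: reduce strong duality to the pointwise saddle-point property of the Hamiltonian integrand, which holds trivially in Model 1 by separation of the $v$- and $(a,\pi)$-dependence, and in Model 2 by concavity in $v$ (requiring $\eta > R_{vv}/2$) together with convexity in $\pi$ from Lemma~\ref{lma:crucial}. You are in fact somewhat more careful than the paper, which simply asserts that concave--convex implies the minimax interchange; your explicit appeal to Sion's theorem with a coercivity/truncation step, and your remark that $\tilde\eta>0$ alone would only give concavity \emph{after} the inner minimization, correctly identify why the stronger hypothesis is imposed.
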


\begin{proof}
It suffices to show that the Hamiltonian in the rHJI equations for the primal \eqref{eqn:rr-control1} and the dual \eqref{eqn:dual} satisfies the saddle point property. 
Recall that the Hamiltonian $H_1$ in Model 1 for the primal and the dual, after disregarding all the terms that are not dependent on $v$ or $a$ and $\pi$, is given by 
\[
H_1(t, x, v, \pi) := \!\!\!\int \left\{ -\eta v^2 + v V_x + \left(R_{xa} + \gamma_M \right) xa + \frac{R_{aa}}{2} a^2 + \frac1\beta\log\frac{\pi_t(a)}{\pi^0_t(a)} \right\} \pi_t(a) da 
\]
for $v \in \R$ and $\pi\ll\pi_t^0$.  
Notice that the $v$ dependence and the $(a, \pi)$ dependence in $H_1$ are separated.  
It follows that the Hamiltonian in Model 1 satisfies automatically the saddle point property 
\[
\max_v \min_{\pi\ll\pi_t^0} H_1(t, x, v, \pi) = \min_{\pi\ll\pi_t^0} \max_v H_1(t, x, v, \pi)
\]
for fixed $t$ and $x$.

As for Model 2. Note that, again by disregarding all the terms that are not dependent on $v$ or on $a$ and $\pi$, the Hamiltonian $H_2$ in Model 2 for the primal and the dual is given, for $v\in\R$ and $\pi\ll\pi_t^0$, by 
\begin{equation}
H_2(t, x, v, \pi) := \!\!\! \int \left\{-\hat\eta v^2 + v V_x + R_{va} va + \frac{R_{aa}}{2} a^2 + \gamma_M x a + \frac1\beta\log\frac{\pi_t(a)}{\pi^0_t(a)} \right\} \pi_t(a) da
\end{equation}
where $\hat \eta := \eta - \frac{R_{vv}}{2}$ and $\hat\eta > 0$ by assumption. 
Apparently, for any fixed $t, x, \pi$, $H_2$ is concave in $v$ and convex in $\pi$ (by Lemma \ref{lma:crucial}) for any fixed $t, x, v$, thus $H_2$ also satisfies the saddle point property 
\[
\max_v \min_{\pi\ll\pi_t^0} H_2(t, x, v, \pi) = \min_{\pi\ll\pi_t^0} \max_v H_2(t, x, v, \pi)
\]
for fixed $t$ and $x$. This completes the proof. 
\end{proof}
The duality theorem asserts that, under certain assumptions on model parameters, the relative entropy-regularized robust order execution problem, when regarded as a stochastic differential game, admits an equilibrium between the agent's optimal trading strategy and the market's optimal reaction per the principle of least relative entropy.

\section{Extensions to non-constant volatility} \label{sec:non_constant_volatility}
We consider a possible extension of the model to include non-constant, but only time dependent, volatility of the price dynamics in this section. Rather than being a constant, let the price volatility $\sigma^S$ be a function of time defined by 
\begin{equation}
\sigma_t^S = \sqrt{\int \Tilde{\sigma}^2(a)\pi_t(a) da}. \label{eqn:sigma_tS}
\end{equation}
In other words, the price volatility $\sigma^S$ is given by the square root of the average, in density $\pi_t$, of the variance of market trading rate $\tilde\sigma^2$. 
We summarize the findings in the following theorem without proof.
\begin{theorem}
The value function $V$ satisfies the following rHJI equation
\begin{equation*}
\max_{v} \min_{\pi \ll \pi_t^0} \left\{V_t + \cL V +\int \left\{ \hat R(t, x, s, v, a) + 
\frac1\beta\log\frac{\pi(a)}{\pi^0_t(a)} \right\} \pi(a) da + \rho\sigma^X \sigma_t^S \right\} = 0, 
\end{equation*}
with terminal condition $V(T, x, s) =  G(x, s)$, assuming enough regularity for the value function $V$, where
\begin{equation*}
\hat R(t, x, s, v, a) = \gamma_M xa + \frac{\gamma}{2}(\sigma^X)^2 - \eta v^2 + R(t,x,s,v,a).
\end{equation*}
$\cL$ denotes the infinitesimal generator for the SDEs
\begin{eqnarray*}
&& dS_t = \gamma dX_t + \gamma_M \langle a \rangle_t dt + \sigma_t^S dW^S_t, \\
&& dX_t = v_t dt + \sigma^X dW^X_t,
\end{eqnarray*}
where the Brownian motions $W_t^S$ and $W^X_t$ are correlated with correlation $\rho$.\\
When $\rho=0$ and under the assumptions of {\bf Model 1} or {\bf Model 2}, the optimal solution can be solved explicitly and it coincides with the optimal solution with constant volatility.
\end{theorem}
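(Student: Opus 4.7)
The plan is to follow the same dynamic programming roadmap used in Section~\ref{sec:execution-game}, tracking carefully where the time-dependent volatility $\sigma_t^S$ enters, and then to observe that the only genuinely new ingredient---the cross-variation term---drops out in the $\rho=0$ case, reducing the problem to the constant-volatility ones solved in Sections~\ref{sec:model-1} and~\ref{sec:model-2}.

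For the first claim, I would start by redoing the P\&L decomposition \eqref{eqn:pnl0-1} with $\sigma^S$ replaced by $\sigma_t^S$. Every term other than the covariation $\int_0^T \rho\sigma^S\sigma^X\,dt$ is either linear in the $\pi$-moments, in which case it is absorbed into some $\int h(a)\pi(a)\,da$, or is a martingale that vanishes under conditional expectation. The covariation becomes $\int_0^T \rho\sigma^X\sigma_t^S\,dt$, and because $\sigma_t^S=\sqrt{\int \tilde\sigma^2(a)\pi_t(a)\,da}$ is a \emph{nonlinear} functional of $\pi_t$, it cannot be rewritten as $\int h(a)\pi(a)\,da$ for any $h$. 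Applying Ito's formula to $V(t,X_t,S_t)$ along the updated SDEs, taking the infinitesimal $\max_v\min_\pi$, and collecting terms gives the stated rHJI equation, with $\hat R$ equal to $\tilde R$ minus the old $\rho\sigma^S\sigma^X$ contribution, and with $\rho\sigma^X\sigma_t^S$ appearing as a standalone summand outside the $\int\pi(a)\,da$ integral.

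For the second claim I would propose the ansatz $V(t,x,s)=V(t,x)$. Under Model~1 or Model~2 one has $g_M\equiv 0$ and $R$ independent of $s$, so the only channel through which $s$ can enter $V$ is via $\cL V$; under the ansatz all $s$-derivatives of $V$ vanish and every $s$-dependent piece of $\cL$ disappears, so the resulting equation for $V(t,x)$ is self-consistent. When additionally $\rho=0$, the bonus term $\rho\sigma^X\sigma_t^S$ vanishes identically, and the rHJI reduces to exactly the equation solved in Section~\ref{sec:model-1} (resp.\ Section~\ref{sec:model-2}), up to an $(x,v,\pi)$-independent constant absorbed into $H_0$. The optimal feedbacks $v^*$ and $\pi^*$ therefore coincide with those of the constant-volatility case, and the identification of $V$ with the value of the game is closed by Theorem~\ref{thm:verification}.

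The main subtlety I anticipate is justifying the $V=V(t,x)$ ansatz now that $\sigma_t^S$ is a nonlinear functional of $\pi$. Had $V$ depended on $s$, then $V_{ss}$ would be multiplied by $(\sigma_t^S)^2$ (which is linear in $\pi$ and could still be folded inside the integral) while $V_{xs}$ would be multiplied by $\rho\sigma^X\sigma_t^S$ (which cannot). It is precisely the disappearance of the latter under $\rho=0$, combined with the $s$-independence of $R$ and $g_M$, that decouples $S$ from the control problem; absent $\rho=0$ the ansatz would fail and no explicit reduction would be available.
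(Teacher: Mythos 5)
Your proposal is correct and follows what is clearly the intended route: the paper states this theorem without proof, and your argument is just the natural extension of the dynamic-programming derivation of the first rHJI theorem together with the Model~1/Model~2 reductions, with the covariation term $\rho\sigma^X\sigma_t^S$ correctly isolated outside the $\pi$-integral because $\sigma_t^S$ is a nonlinear functional of $\pi_t$. Your closing observation — that the $V=V(t,x)$ ansatz survives precisely because $\rho=0$ kills the only non-linear-in-$\pi$ coupling ($V_{xs}$ and the standalone covariation) while $g_M\equiv 0$ and the $s$-independence of $R$ handle the rest — is exactly the right justification, and the identification with the constant-volatility solution is immediate since the $\rho\sigma^S\sigma^X$ term there only ever entered the $H_0$ equation.
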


\section{Numerical examples} \label{sec:numerical}
In this section, we conduct numerical experiments on the implementation of optimal strategies obtained in Section \ref{sec:soln} and stress testing the strategies against various parameters. 
For Models 1 and 2, Monte Carlo simulations are conducted to illustrate the performances of the optimal strategies versus those of related adapted TWAP strategies as in \eqref{eqn:adap-twap}. We note that a TWAP strategy is meant to dice a meta order into child orders and gradually release them to the market using evenly divided time slots between the initial and terminal time. It follows that, in the continuous time limit, a TWAP strategy converges to trading in a constant rate. Its goal is to make execution price stay close to the time-averaged price between the initial and terminal time so as to minimize market impact of execution. Similarly, a VWAP (volume-weighted average price) strategy is a TWAP strategy except the evolution of time is in terms of cumulative total traded volume. However, as opposed to TWAP strategies which basically require zero modeling for market environment, implementation of a VWAP strategy requires the modeling of market volume process. In practice, TWAP and VWAP strategies are benchmarks for assessing performance of trading strategies or algorithms. 

The performance criterion $V$ in each experiment is decomposed into three additive components $V_{PnL}$, $V_{risk}$, and $V_{entropy}$. 
These components represent respectively contributions to the performance criterion $V$ from the expected P\&L, the risk, and the relative entropy. The performance criterion, as well as its three components, are then stress tested against certain extreme parameters. Monte Carlo simulations in each experiment are conducted with 4096 sample paths in 1000 time steps. The parameters chosen in the following numerical examples, mainly referenced to the parameters considered in \cite{almgren-chriss}, \cite{c-dg-w1}, and \cite{dg-t-w}, are for convenience only. In reality, the parameters need to be calibrated to the market data. 
\subsection{Model 1}
In Model 1, the three components for performance criterion $V$ are specified as 
\[
V = V_{PnL} + V_{risk} + V_{entropy}, 
\]
where
\beaa
&& V_{PnL} = -\delta X_T^2 + X_T S_T - X_0 S_0 - \int_0^T \tilde{S}_t dX_t, \\
&& V_{risk} = \int_0^T \left [ \frac{1}{2} R_{xx} x_t^2 + R_{xa} x_t \langle a \rangle_t + \frac{1}{2} R_{aa} \left(\langle a \rangle_t^2 + \frac{1}{s_t + \beta R_{aa}}  \right) \right] dt, \\
&& V_{entropy} = \int_0^T \left[\frac{1}{2\beta} \log\frac{s_t + \beta R_{aa}}{s_t} - \frac{R_{aa}}{2} \left( \langle a \rangle_t^2 + \frac{1}{s_t + \beta R_{aa}}  \right) + \frac{s_t + \beta R_{aa}}{ 2\beta} \langle a \rangle_t^2  \right] dt.
\eeaa

The strategies, with their corresponding optimal posterior mean of market trading rate, implemented are
\begin{enumerate}
\item Optimal strategy 
\beaa
&& v_t = -\hat{A}_1\coth\left\{ \hat{A}_1 (T-t) + \alpha_1 \right\} x_t, \qquad 
\langle a \rangle_t = -\frac{\beta (\gamma_M + R_{xa})}{s_t + \beta R_{aa}} x_t;
\eeaa

\item TWAP strategy
\beaa
&& v_t = -\frac{x_t}{T-t}, \qquad
\langle a \rangle_t = -\frac{\beta (\gamma_M + R_{xa})}{s_t + \beta R_{aa}}x_t.
\eeaa
\end{enumerate}
In the simulations that follow, parameters below are chosen and fixed across simulations. 
\beaa 
&& \gamma = 2.5\times 10^{-7}, \; R_{xa} = -5\times 10^{-6}, \; R_{aa} = 9 \times 10^{-7}, \; \beta=1, m_t = 0, \\
&& s_t = 10^{-8}, \; X=10^6, \; S=100, \; \sigma^X = 10^5, \; \sigma^S = 10, \; \rho=0.3, \; T=1.
\eeaa
The simulation results are shown in Figures \ref{model1_value_large_gammaM} through \ref{model1_value_large_Rxx}. In each figure, histograms for performance criterion and its three components are exhibited for the optimal and its related TWAP strategies; on top of the histograms show the Box plots generated by the data. 
The parameters as benchmark are set by
\[
\gamma_M = 2.5\times 10^{-6}, \; \eta = 2.5\times 10^{-6}, \;  \delta = 1.25\times 10^{-4}, R_{xx} = -10^{-6}
\]
whose simulation results are shown in Figure \ref{model1_value_benchmark}. 
Figure \ref{model1_value_large_gammaM} shows the result of stress testing the optimal and TWAP strategies against large permanent impact from market $\gamma_M = 10^{-5}$. Figure \ref{model1_value_large_eta_small_delta} shows the result of stress testing against relatively large $\eta=10^{-3}$ and small $\delta=2\times10^{-4}$. Finally, Figure \ref{model1_value_large_Rxx} is for stress testing against relatively large $R_{xx}=10^{-4}$. Table \ref{tab:table1} summarizes the parameters in each case and their corresponding figures.
\begin{table}[h!]
\begin{center}
    \caption{Parameters and figures for Model 1.}
    \label{tab:table1}
\begin{tabular}{|c|c|c|c|c|} 
\hline
     & $\gamma_M$ & $\eta$ & $\delta$ & $R_{xx}$  \\
\hline \hline
Benchmark & $2.5\times 10^{-6}$ & $2.5\times 10^{-6}$ & $1.25\times 10^{-4}$ & $-10^{-6}$ \\
\hline
Figure \ref{model1_value_large_gammaM} & $10^{-5}$ & $2.5\times 10^{-6}$ & $1.25\times 10^{-4}$ & $-10^{-6}$ \\
\hline
Figure \ref{model1_value_large_eta_small_delta} & $2.5\times 10^{-6}$ & $10^{-3}$ & $2\times 10^{-4}$ & $-10^{-6}$ \\
\hline
Figure \ref{model1_value_large_Rxx} & $2.5\times10^{-6}$ & $2.5\times 10^{-6}$ & $1.25\times 10^{-4}$ & $-10^{-4}$ \\
\hline
\end{tabular}
\end{center}
\end{table}

We observe from the simulation results that the total performances under TWAP and under optimal strategy in the benchmark case are almost identical while TWAP performs better in entropy but worse in risk. The variabilities of the parameters $\gamma_M$, $\gamma$, $\eta$, and $\delta$ mainly contribute to the variance in P\&L, while $R_{xx}$ mainly to variance of risk. We remark that in all the experiments, the magnitude of the entropy term $V_{entrop}$ are insignificant compared to those of the risk $V_{risk}$ and of the P\&L $V_{PnL}$. Also, it seems in Figure \ref{model1_value_large_eta_small_delta}, TWAP is doing better than the optimal strategy in $V_{risk}$. However, while the magnitude between $V_{PnL}$ and $V_{risk}$ in other cases are comparable, the magnitude of $V_{PnL}$ is almost a hundred times higher than that of $V_{risk}$ in this case. It turns out that in this case, in comparison with TWAP, the optimal strategy chooses to sacrifice risk for higher P\&L. Overall, apart from the benchmark case, the optimal strategies outperform and incurring lower variance as opposed to their TWAP counterparties as shown in the figures.

\begin{figure}
\centering
\includegraphics[width=.8\linewidth]{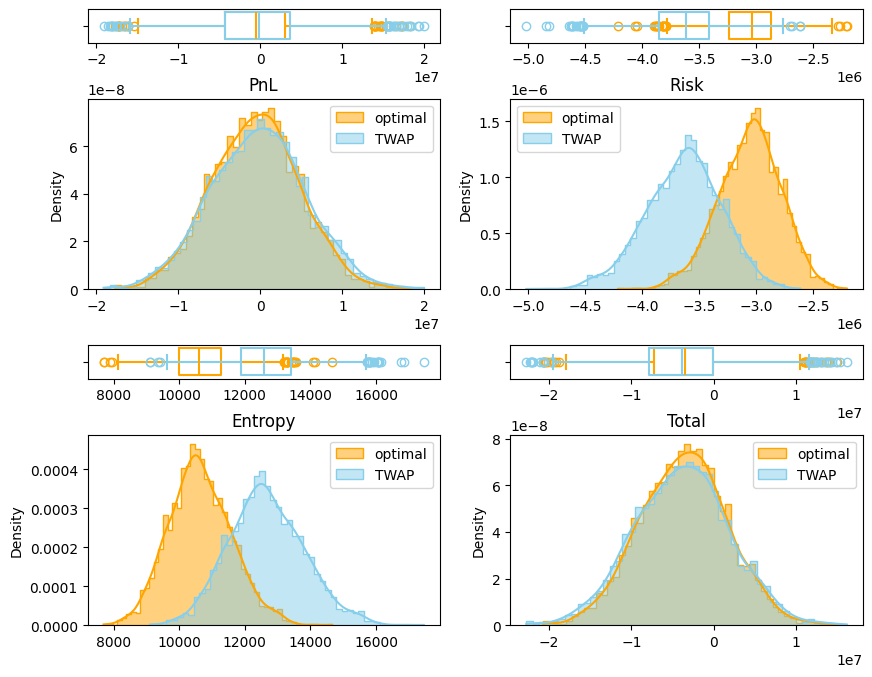}
\caption{Performance under benchmark parameters in Model 1.}
\label{model1_value_benchmark}
\end{figure}

\begin{figure}
\centering
\includegraphics[width=.8\linewidth]{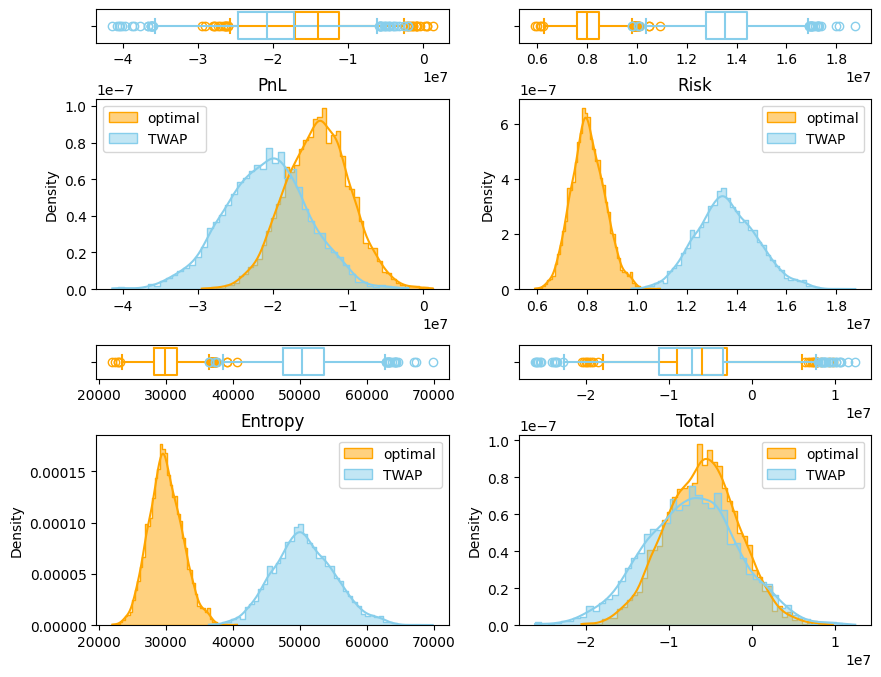}
\caption{Performance under relatively large $\gamma_M=10^{-5}$ in Model 1.}
\label{model1_value_large_gammaM}
\end{figure}

\begin{figure}
\centering
\includegraphics[width=.8\linewidth]{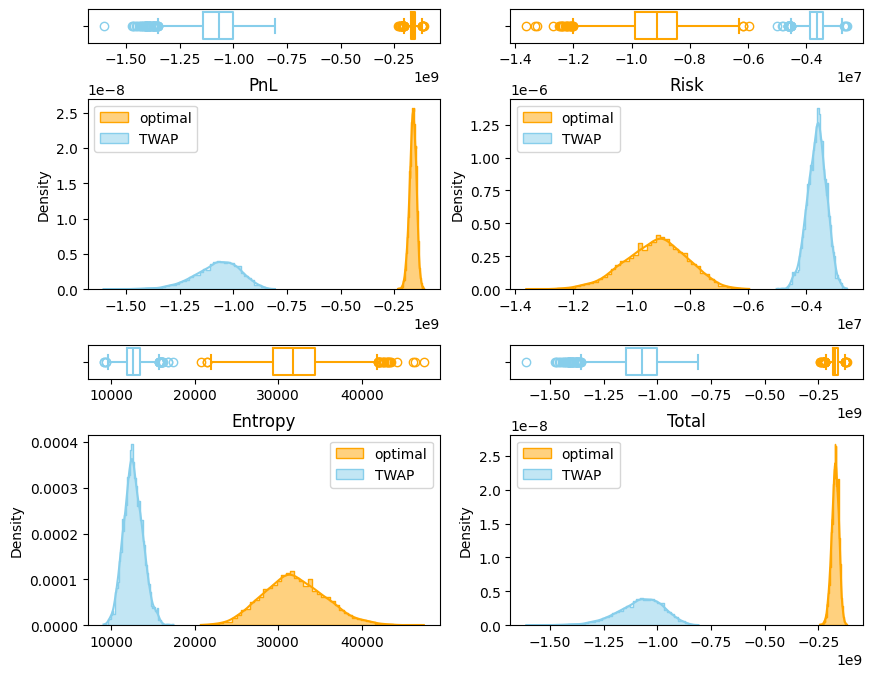}
\caption{Performance under relatively large $\eta = 10^{-3}$ and small $\delta = 2\times 10^{-4}$ in Model 1.}
\label{model1_value_large_eta_small_delta}
\end{figure}

\begin{figure}
\centering
\includegraphics[width=.8\linewidth]{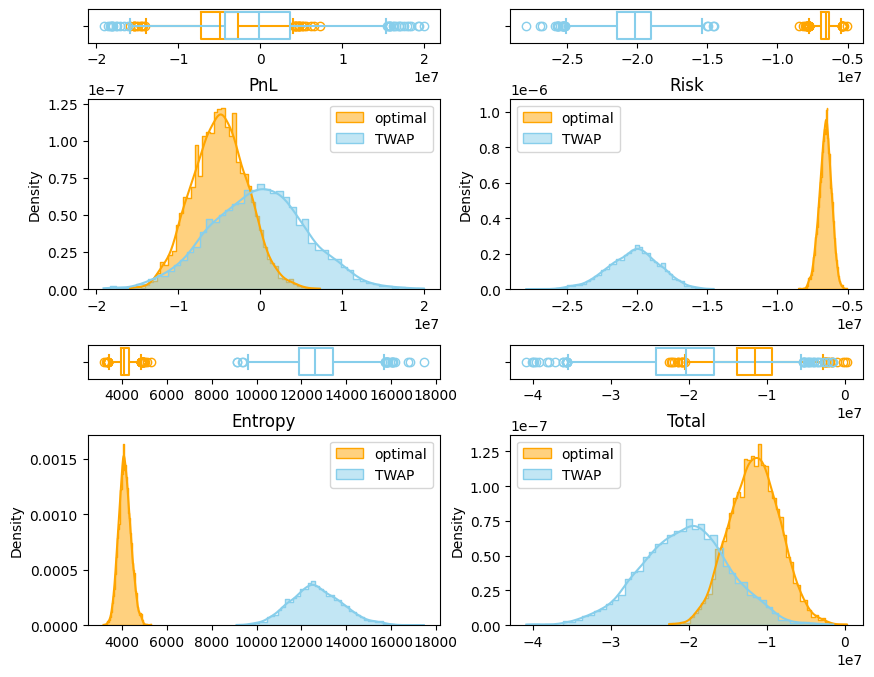}
\caption{Performance under relatively large negative $R_{xx} = -10^{-4}$ in Model 1.}
\label{model1_value_large_Rxx}
\end{figure}

\subsection{Model 2}
For Model 2, the two components $V_{PnL}$ and $V_{entropy}$ of the performance criterion $V$ remain the same as in Model 1, whereas the component $V_{risk}$ in this case becomes
\beaa
&& V_{risk} = \int_0^T \left[\frac{1}{2}R_{vv} v_t^2 + R_{va} v_t\inn{a}_t + \frac{1}{2} R_{aa} \left(\inn{a}_t^2 + \frac{1}{s_t + \beta R_{aa}}\right) \right] dt.
\eeaa
The strategies implemented in Model 2 are
\begin{enumerate}
\item Optimal strategy
\beaa
&& v_t = -\hat{A}_2\coth\left\{ \hat{A}_2(T-t) + \alpha_2 \right\}x_t, \\
&& \langle a \rangle_t = \left[ C \hat{A}_2 \coth\left\{ \hat{A}_2 (T-t) + \alpha_2 \right\} - A_2 \right] \frac{x_t}{\gamma_M};
\eeaa


\item TWAP strategy 
\beaa
&& v_t = -\frac{x_t}{T-t}, \qquad
\langle a \rangle_t = \left[ \frac{C}{T-t} - A_2 \right]\frac{x_t}{\gamma_M}.
\eeaa
\end{enumerate}
As for Model 1, in the simulations that follow the parameters below are chosen and fixed across simulations. 
\beaa 
&& \gamma = 2.5\times 10^{-7}, \; R_{va} = 5\times 10^{-6}, \; R_{aa} = 9 \times 10^{-7}, \; \beta=1, m_t = 0, \\
&& s_t = 10^{-8}, \; X=10^6, \; S=100, \; \sigma^X = 10^5, \; \sigma^S = 10, \; \rho=0.3, \; T=1.
\eeaa
We demonstrate the results in Figures \ref{model2_value_large_gammaM}, \ref{model2_value_large_eta_small_delta}, and \ref{model2_value_large_Rvv}. Each figure exhibits histograms for performance criterion and its three components of the optimal and its related TWAP strategies, topped with Box plots. The parameters as benchmark are set by
\[
\gamma_M = 2.5\times 10^{-6}, \; \eta = 2.5\times 10^{-6}, \;  \delta = 1.25\times 10^{-4}, R_{vv} = -10^{-6}
\]
whose simulation results are shown in Figure \ref{model2_value_benchmark}. 
Figure \ref{model2_value_large_gammaM} shows the result of stress testing the optimal and TWAP strategies against large permanent impact from market $\gamma_M = 10^{-5}$. Figure \ref{model2_value_large_eta_small_delta} shows the result of stress testing against relatively large $\eta=10^{-3}$ and small $\delta=2\times10^{-4}$. Finally, Figure \ref{model2_value_large_Rvv} is for stress testing against relatively large $R_{vv}=10^{-4}$. Table \ref{tab:table2} summarizes the parameters in each case and their corresponding figures.
\begin{table}[h!]
\begin{center}
    \caption{Parameters and figures for Model 2.}
    \label{tab:table2}
\begin{tabular}{|c|c|c|c|c|} 
\hline
     & $\gamma_M$ & $\eta$ & $\delta$ & $R_{vv}$  \\
\hline \hline
Benchmark & $2.5\times10^{-6}$ & $2.5\times 10^{-6}$ & $1.25\times 10^{-4}$ & $-10^{-6}$ \\
\hline
Figure \ref{model2_value_large_gammaM} & $10^{-5}$ & $2.5\times 10^{-6}$ & $1.25\times 10^{-4}$ & $-10^{-6}$ \\
\hline
Figure \ref{model2_value_large_eta_small_delta} & $2.5\times 10^{-6}$ & $10^{-3}$ & $2\times 10^{-4}$ & $-10^{-6}$ \\
\hline
Figure \ref{model2_value_large_Rvv} & $2.5\times10^{-6}$ & $2.5\times 10^{-6}$ & $1.25\times 10^{-4}$ & $-10^{-4}$ \\
\hline
\end{tabular}
\end{center}
\end{table}

We again summarize that, from the simulation results, the variabilities of the parameters $\gamma_M$, $\gamma$, $\eta$, and $\delta$ mainly contribute to the variance in P\&L, while $R_{vv}$ mainly to variance of risk. We remark that, among the histograms shown, histograms under optimal strategies shown in Figures \ref{model2_value_large_gammaM} and \ref{model2_value_large_eta_small_delta} are much more concentrated than those of TWAP, when compared with the other two cases. Notice that these two cases are for stress tests with large impact coefficients: Figure \ref{model2_value_large_gammaM} for large market permanent impact coefficient $\gamma_M$ and Figure \ref{model2_value_large_eta_small_delta} for large temporary impact $\eta$.  Heuristics can be that, as mentioned earlier that TWAP implementation does not take dynamic of market environment into account, when exposed to extreme market conditions, it performs much more wildly compared to the optimal. Overall, the optimal strategies outperform, in some cases much better, and incurring lower variance as opposed to their TWAP counterparties as shown in the figures.

\begin{figure}
\centering
\includegraphics[width=.8\linewidth]{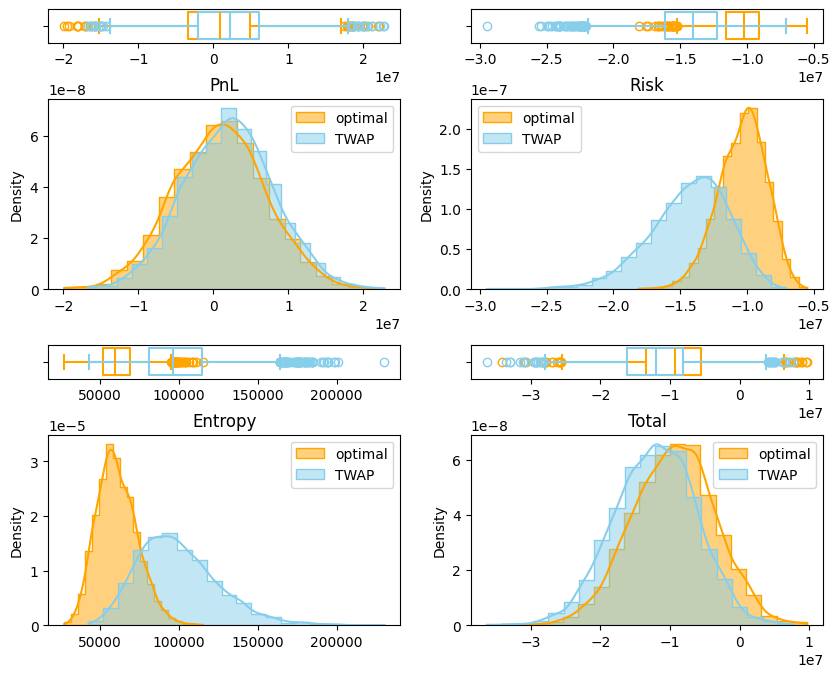}
\caption{Performance under benchmark parameters in Model 2.}
\label{model2_value_benchmark}
\end{figure}

\begin{figure}
\centering
\includegraphics[width=.8\linewidth]{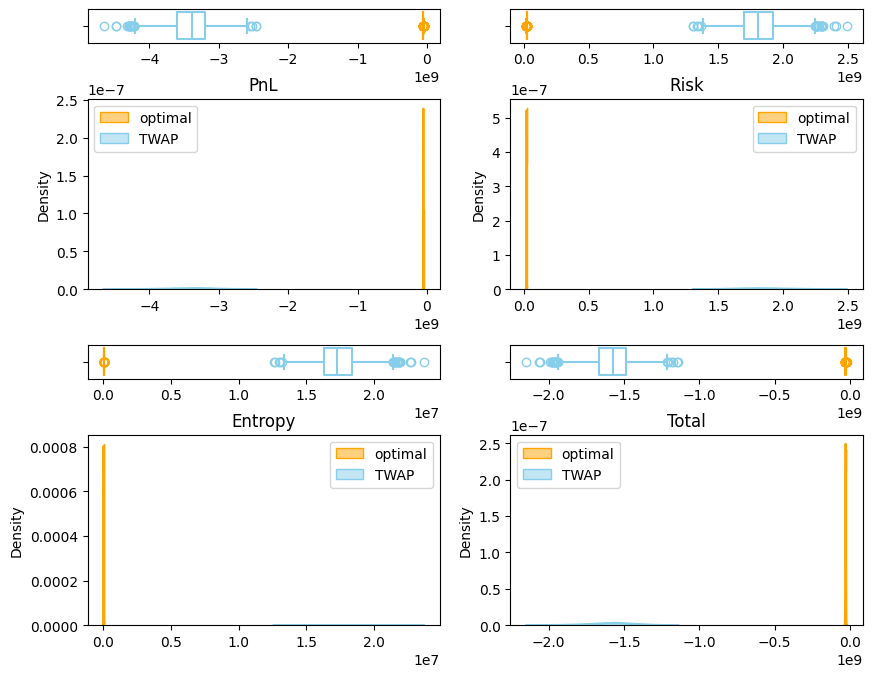}
\caption{Performance under relatively large $\gamma_M = 10^{-5}$ in Model 2.}
\label{model2_value_large_gammaM}
\end{figure}

\begin{figure}
\centering
\includegraphics[width=.8\linewidth]{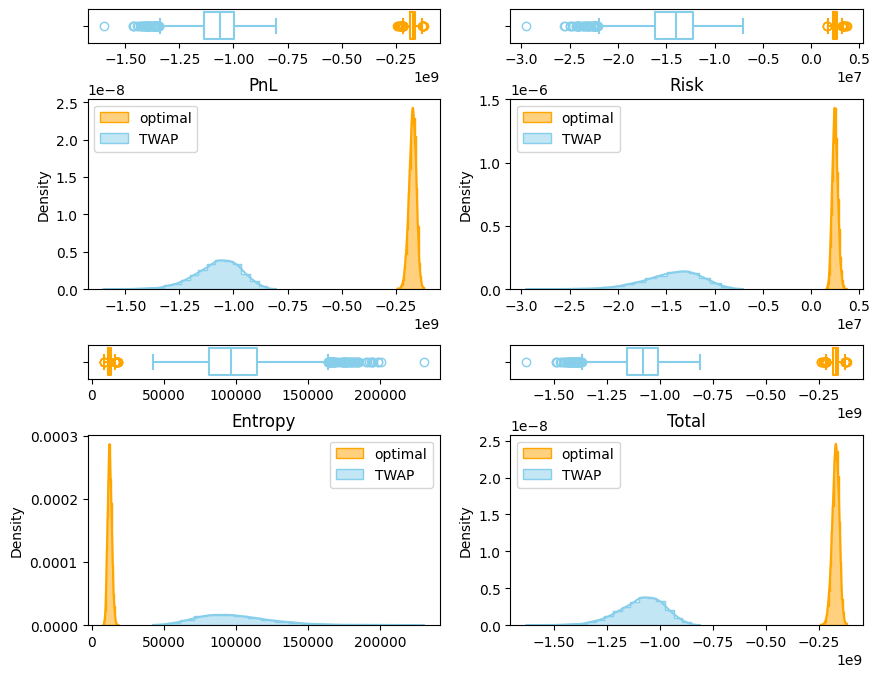}
\caption{Performance under relatively large $\eta = 10^{-3}$ and small $\delta = 2\times 10^{-4}$ in Model 2.}
\label{model2_value_large_eta_small_delta}
\end{figure}

\begin{figure}
\centering
\includegraphics[width=.8\linewidth]{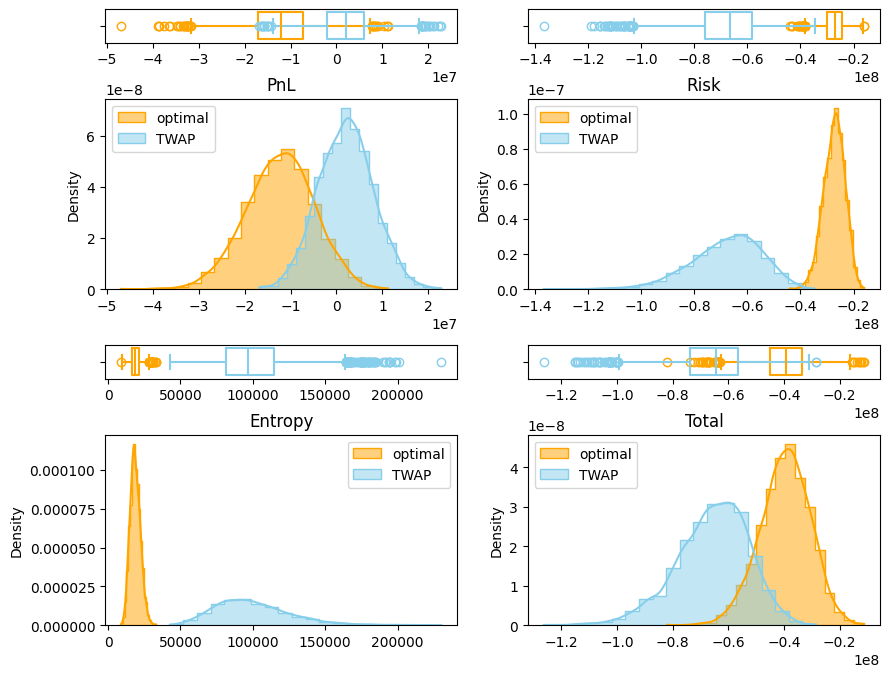}
\caption{Performance under relatively large negative $R_{vv} = -10^{-4}$ in Model 2.}
\label{model2_value_large_Rvv}
\end{figure}

\section{Conclusion and discussion} \label{sec:conclusion}
In this article, we cast the order execution problem as a relative entropy-regularized robust optimal control problem based on principle of least relative entropy for market liquidity and uncertainty. The market impact model proposed in the article added a permanent impact component from averaged market trading rate for the purpose of assessing the market's liquidity and uncertainty risk in order execution. 
Under the assumptions of Gaussian distribution and linear-quadratic framework, the value function, the optimal strategy as well as the posterior distribution for market trading rate for the resulting regularized stochastic differential game were obtainable subject to solving a system of Riccati and linear differential equations. Two models assumptions were proposed and the resulting Riccati equations and their corresponding solutions were shown. 
Calibration of the model to market data and significance of the market trading rate component worth following up in future studies. Extensions to the current model include (a) agent's impact being transient while market's impact remains permanent; (b) the use of $f$-divergence for regularization.

\section*{Acknowledgement}
We are grateful for comments from the Mathematical Finance Seminar participants at Ritsumeikan University and the participants of the XXIV Quantitative Finance Workshop at University of Cassino in Gaeta, Italy.
We thank Jiro Akahori,  Arturo Kohatsu-Higa, and Claudio Tebaldi for helpful discussions and suggestions.
THW is partially supported by the National Natural Science Foundation of China Grant 11971040.


\section{Appendix - Relative entropy-regularized control problem} \label{sec:appendix}
In this appendix, we briefly review the relative entropy-regularized control problem for reader's convenience. For more detailed discussions on entropy-regularized stochastic control problem we refer to \cite{flemming-nisio} and more recently \cite{kim-yang}, \cite{wzz}, and the references therein. 

The goal of relative entropy-regularized control problem is to determine an optimal distribution of controls that maximizes or minimizes the objective functional. Specifically,  let $\pi_t^0(a|x)$ be the ``prior" distribution of controls at time $t$ and state $x$. For any given distribution $\pi_t(a)$ of control $a$ at time $t$, consider the controlled SDE
\begin{eqnarray*}
dX_t &=& \bar\mu(X_t, \pi_t) dt + \bar\sigma(X_t, \pi_t) \, dW_t, 
\end{eqnarray*}
where
\begin{eqnarray*}
&& \bar\mu(x, \pi) = \int \mu(x, a) d\pi(a), \qquad \quad
\bar\sigma(x, \pi) = \sqrt{\int \sigma^2(x, a) d\pi(a)}
\end{eqnarray*}
for some drift $\mu$ and volatility $\sigma$. The relative entropy-regularized control problem seeks to determine an optimal ``posterior" distribution $\pi_t(a)$ of controls at time $t$ amongst certain admissible distributions $\cA$ that optimizes the following expected relative entropy-regularized, or Kullback-Leibler divergence regularized, objective functional
\begin{equation}
\min_{\pi\in\cA} \Eof{g(X_T)  + \int_0^T \int \left\{R(t, X_t, a) + \frac1\beta \log\frac{\pi_t(a)}{\pi^0_t(a|X_t)} \right\} \pi_t(a)da \, dt}  \label{eqn:r-control}
\end{equation}
subject to $\pi_t$ being a probability measure for all $t$, i.e., $\pi_t \geq 0$ and $\int \pi_t(a) da = 1$ for all $t$. $\beta > 0$ is the parameter that enforces the closeness in the sense of Kullback-Leibler divergence between the distributions $\pi_t$ and $\pi^0_t$; the larger the $\beta$, the less tightness between the two distributions is allowed. 

To solve the problem, as in the classical control theory, define the value function $V$ for the relative entropy-regularized control problem \eqref{eqn:r-control} as
\[
V(t, x) = \min_{\pi\in\cA_t}\Eof{\left. g(X_T) + \int_t^T \int \left\{ R(s, X_s, a) + \frac1\beta \log\frac{\pi_s(a)}{\pi^0_t(a|X_s)} \right\}\pi_s(a)da\,ds\right|X_t = x}. 
\]
One can show, by applying the standard argument as in the classical control theory, that $V$ satisfies the following {\it relative entropy-regularized HJB (rHJB hereafter) equation}.
\begin{equation}
\min_{\pi_t} \int \left\{V_t + \frac{\sigma^2}2V_{xx} + \mu V_x + R(t, x, a) + 
\frac1\beta\log\frac{\pi_t(a)}{\pi^0_t(a|x)} \right\} \pi_t(a) da = 0, \label{eqn:rHJB}
\end{equation}
where $\pi_t \geq 0$ and $\int \pi_t(a|x) da = 1$ for all $x$, with terminal condition $V(T, x) = g(x)$.

First order criterion by variational calculus implies that the optimal ``posterior" distribution $\pi_t$ is given in terms of feedback control as  
\begin{equation}
\pi_t(a) = \frac1{Z(x)} \pi^0_t(a|x) e^{-\beta \left\{V_t + \frac{\sigma^2}2V_{xx} + \mu V_x + R(t, x, a) \right\}}, \label{eqn:optimal-pi}
\end{equation}
where $Z(x)$ is the normalizing constant
\begin{equation}
Z(x) = \int \pi^0_t(a|x) e^{-\beta \left\{V_t + \frac{\sigma^2}2V_{xx} + \mu V_x + R(t, x, a) \right\}} da. \label{eqn:Z}
\end{equation}
We remark that \eqref{eqn:optimal-pi} resembles a Gibbs measure where $\beta$ plays the role of inverse temperature. 

By substituting \eqref{eqn:optimal-pi} and \eqref{eqn:Z} into \eqref{eqn:rHJB}, the rHJB equation \eqref{eqn:rHJB} reduces to 
\begin{equation}
-\frac1\beta \log \int \pi^0_t(a|x) e^{-\beta \left\{V_t + \frac{\sigma^2}2V_{xx} + \mu V_x + R(t, x, a) \right\}} da = 0 \label{eqn:rHJB-reduced}
\end{equation}
or equivalently
\begin{equation}
\int \pi^0_t(a|x) e^{-\beta \left\{V_t + \frac{\sigma^2}2V_{xx} + \mu V_x + R(t, x, a) \right\}} da = 1 \label{eqn:rHJB-reduced-1}
\end{equation}
with terminal condition $V(T, x) = g(x)$. We conclude that the optimal ``posterior" distribution $\pi^*_t$ is given in terms of the value function $V$ and the ``prior" distribution $\pi^0_t$ as 
\[
\pi^*_t(a) = \pi^0_t(a|x) e^{-\beta \left\{V_t + \frac{\sigma^2}2V_{xx} + \mu V_x + R(t, x, a) \right\}}
\]
and the rHJB equation \eqref{eqn:rHJB-reduced-1} guarantees that the $\pi_t^*$ given above is indeed a probability distribution. 

As an extension to the relative entropy-regularized control problem, one may consider the following {\it $f$-divergence regularized} control problem.  
\begin{equation}
\min_{\pi} \Eof{g(X_T)  + \int_0^T \int \left\{R(t, X_t, a) + \frac1\beta f\left(\frac{\pi_t(a)}{\pi^0_t(a)}\right) \right\} \pi_t(a) da \, dt} \label{f-control}
\end{equation}
for some function $f$. We remark that the Kullback-Leibler divergence is recovered by letting $f(x) = \log x$ and the Tsallis divergence by $f(x) =  \frac1{p - 1} (x^{p - 1} - 1)$ for $p > 1$.

By the same token, one can show that the value of the control problem \eqref{f-control} satisfies an rHJB equation. Let $\tilde f(x) = x f(x)$, $\cL V = \frac{\sigma^2}2V_{xx} + \mu V_x + R(t, x, a)$ and $\lambda$ the Lagrange multiplier. The $f$-divergence rHJB equation reads
\begin{eqnarray*}
&& \int \left\{V_t + \cL V + 
\frac1\beta f\circ\left(\tilde f'\right)^{-1}\left(-\beta\cL V + \beta\lambda\right) \right\}
\left(\tilde f'\right)^{-1}(-\beta\cL V + \beta\lambda) \pi^0_t(a|x) da = 0, \\
&& \int \left(\tilde f'\right)^{-1}(-\beta\cL V + \beta\lambda) \pi^0_t(a|x) da = 1 
\end{eqnarray*}
which unfortunately cannot be further simplified. We note that, in the case of relative entropy, i.e., $f(x) = \log x$, the two equations above can be combined into one, as shown in \eqref{eqn:rHJB-reduced}. 

\section*{Conflicts of Interest} 
The authors declare no conflict of interest.


\end{document}